\definecolor{darkblue}{rgb}{0,0,1}
\newtheorem{theorem}{Theorem}
\newtheorem{lemma}{Lemma}
\newtheorem{remark}{Remark}
\newtheorem{assumption}{Assumption}
\newcommand{\ba}{\left[ \begin{array}}
\newcommand{\ea}{\\ \end{array} \right]}
\newcommand{\DE}{{\Delta}}
\newcommand{\A}{{\alpha}}
\newcommand{\N}{{\nabla}}
\newcommand{\F}{\frac}
\newcommand{\LA}{\langle}
\newcommand{\RA}{\rangle}
\newcommand{\TI}{\tilde}
\newcommand{\R}{{\mathbb{R}}}
\newcommand{\vx}{{\mathbf{x}}}
\newcommand{\vy}{{\mathbf{y}}}
\newcommand{\vW}{{\mathbf{W}}}
\newcommand{\vR}{{\mathbf{R}}}
\begin{document}

\title{Can Primal Methods Outperform Primal-dual Methods in Decentralized Dynamic Optimization?}
\author{\authorblockN{Kun Yuan$^\dagger$, Wei Xu$^\ddagger$}, Qing Ling$^*$
\thanks{Kun Yuan is with Department of Electrical and Computer Engineering, University of California, Los Angeles. Wei Xu is with Department of Automation, University of Science and Technology of China. Qing Ling is with School of Data and Computer Science and Guangdong Province Key Laboratory of Computational Science, Sun Yat-Sen University. This work is supported in part by NSF China Grants 61573331 and 61973324, and Fundamental Research Funds for the Central Universities. A preliminary version of this paper has been published in Asilomar Conference on Signals, Systems, and Computers, Pacific Grove, USA, November 3-6, 2019. Corresponding email: lingqing556@mail.sysu.edu.cn. }	
}


\maketitle

\begin{abstract}
In this paper, we consider the decentralized dynamic optimization problem defined over a multi-agent network. Each agent possesses a time-varying local objective function, and all agents aim to collaboratively track the drifting global optimal solution that minimizes the summation of all local objective functions. The decentralized dynamic optimization problem can be solved by primal or primal-dual methods, and when the problem degenerates to be static, it has been proved in literature that primal-dual methods are superior to primal ones. This motivates us to ask: \textit{are primal-dual methods necessarily better than primal ones in decentralized dynamic optimization?}

	
To answer this question, we investigate and compare convergence properties of the primal method, diffusion, and the primal-dual approach, decentralized gradient tracking (DGT). Theoretical analysis reveals that diffusion can outperform DGT in certain dynamic settings. We find that DGT and diffusion are significantly affected by the drifts and the magnitudes of optimal gradients, respectively. In addition, we show that DGT is more sensitive to the network topology, and a badly-connected network can greatly deteriorate its convergence performance. These conclusions provide guidelines on how to choose proper dynamic algorithms in various application scenarios. Numerical experiments are constructed to validate the theoretical analysis.
\end{abstract}

\begin{IEEEkeywords}
Decentralized dynamic optimization, diffusion, decentralized gradient tracking (DGT) 
\end{IEEEkeywords}

\IEEEpeerreviewmaketitle

\section{Introduction}
\label{sec:intro}
Consider a bidirectionally connected network consisting of $n$ agents. At every time $k$, these agents collaboratively solve a decentralized dynamic optimization problem in the form of
\begin{align}\label{eq1}
	\min\limits_{\TI{x}\in \R^d} \quad \sum_{i=1}^n f^k_i(\TI{x}).
\end{align}
Here, $f^k_i:\R^d\rightarrow \R$ is a convex and smooth local objective function which is only available to agent $i$ at time $k$. The optimization variable $\TI{x}\in\R^d$ is common to all agents, and $\TI{x}^{k*}\in\R^d$ is the optimal solution to \eqref{eq1} at time $k$. Our goal is to find  $\TI{x}^{k*}$ at every time $k$. Problems in the form of \eqref{eq1} arise in decentralized multi-agent systems whose tasks are time-varying. Typical applications include adaptive parameter estimation in wireless sensor networks \cite{jakubiec2013d}, decentralized decision-making in dynamic environments \cite{tu2014distributed}, moving-target tracking in multi-agent systems \cite{Rahili2017dynamic}, dynamic resource allocation in communication networks \cite{maros2018beamforming}, and flow control in real-time power systems \cite{tang2019time}, etc. For more applications in decentralized optimization and learning with streaming information, readers are referred to the recent survey paper \cite{emiliano2019survey}.

When the functions $f_i^k(\tilde{x}) \equiv f_i(\tilde{x})$, i.e., they remain unchanged for all times $k$, problem \eqref{eq1} reduces to the decentralized \textit{static deterministic} optimization problem which can be solved by various decentralized methods. There are extensive research works on decentralized algorithms when the true functions $f_i(\tilde{x})$ (or their first-order and second-order information) are fixed and available at all times. In the primal domain, first-order methods such as diffusion \cite{sayed2014adaptive, sayed2014adaptation}, decentralized gradient descent \cite{nedic2009distributed,yuan2016convergence} and dual averaging \cite{duchi2012dual} are effective and easy to implement. Decentralized second-order methods \cite{mokhtari2017network,bajovic2017second} are also able to solve the static problem. However, these primal methods have to employ decaying step-sizes to reach exact convergence to the optimal solution; when constant step-sizes are employed, they will converge to a neighborhood around the optimal solution \cite{nedic2009distributed, sayed2014adaptive, sayed2014adaptation,yuan2016convergence,duchi2012dual,mokhtari2017network,bajovic2017second}. Another family of decentralized algorithms operate in the primal-dual domain, such as those based on the alternating direction method of multipliers (ADMM) \cite{mateos2010distributed,mota2013d,shi2014linear}. By treating the problem from both the primal and dual domains, decentralized ADMM is shown to be able to converge linearly to the exact optimal solution \cite{shi2014linear}, eliminating the limiting bias suffered by the primal methods. However, decentralized ADMM are computationally expensive since it requires each agent to solve a sub-problem at each time. The first-order variants of decentralized ADMM \cite{ling2015dlm,chang2015multi} can alleviate the computational burden by linearizing the local objective functions. Within the family of decentralized primal-dual methods, there are also many approaches that do not explicitly introduce dual variables but can still achieve fast and exact convergence to the optimal solution. These methods include EXTRA \cite{shi2015extra}, exact diffusion \cite{yuan2018exact}, NIDS \cite{li2017decentralized}, and decentralized gradient tracking (DGT) \cite{xu2015augmented,di2016next,nedic2017achieving,qu2018harnessing, xin2018linear}, which have the same computational complexity as the first-order primal methods, but can converge much faster. With all the above results, it is well recognized that the primal-dual methods are superior to the primal ones for decentralized \textit{static deterministic} optimization.

As to another scenario $f_i^k(\tilde{x}) \equiv \mathbb{E}\, Q(\tilde{x}; \xi_i)$ where $Q(\tilde{x},\xi_i)$ is the loss function associated with the optimization variable $\tilde{x}$ and the random variable $\xi_i$, \eqref{eq1} reduces to the decentralized \textit{static stochastic} optimization problem. This formulation is common in decentralized learning applications, where $\xi_i$ represents one or a mini-batch of random data samples at agent $i$. Since the true functions $\mathbb{E}\, Q(\tilde{x}; \xi_i)$ (or their first-order and second-order information) are generally unavailable, one has to use their stochastic approximations in algorithm design. Under this setting, it has been proved in \cite{towfic2015stability} that the primal method diffusion has a wider stability range and better mean-square-error performance than the primal-dual methods such as Arrow-Hurwicz and augmented Lagrangian. However, recent results still come out to endorse the superiority of the primal-dual methods to primal ones. By removing the intrinsic data variance suffered by stochastic diffusion, a primal-dual algorithm called exact diffusion is proved to converge with much smaller mean-square error in the steady state \cite{tang2018d,yuan2019performance}. Moreover, \cite{yuan2019performance} also indicates that the advantage of exact diffusion over stochastic diffusion becomes more evident when the network topology is badly-connected. Similar results can be found in \cite{pu2018distributed,Xin2019introduction}, showing that DGT outperforms diffusion in the context of decentralized \textit{static stochastic} optimization.

While the primal-dual approaches transcend the primal ones in the static deterministic and stochastic problems, to the best of our knowledge, there is no existing work on how these two families of algorithms are compared for the {\em dynamic} problem. Since the dynamic problem \eqref{eq1} can be regarded as a sequence of static ones, can we expect the same conclusion, i.e., the primal-dual methods are superior to the primal ones, in the dynamic scenario? If not, can we clarify conditions under which we should employ the primal methods rather than the primal-dual methods, or vice versa?

Although various primal and primal-dual decentralized dynamic algorithms have been studied in literature, the above questions still remain open with no clear answers. In the primal domain, decentralized dynamic first-order methods proposed in \cite{sun2017distributed,xi2016distributed,liu2017decentralized} can track the dynamic optimal solution $\tilde{x}^{k*}$ with bounded steady-state tracking error when a proper step-size is chosen. Prediction-correction schemes using second-order information are employed in \cite{simonetto2017decentralized} to improve the tracking performance. Primal-dual methods, such as decentralized dynamic ADMM, have also been studied. It is proved that when both $\tilde{x}^{k*}$ and $\nabla f^k_i(\tilde{x}^{k*})$ drift slowly, the decentralized dynamic ADMM is also able to track the dynamic optimal solution \cite{ling2013decentralized}. Other primal-dual methods such as those in \cite{mokhtari2016decentralized,simonetto2014double} reach similar conclusions. Note that all these papers study the primal or primal-dual methods separately and there are no explicit theoretical results on how they compare against each other. It is also difficult to directly compare these algorithms by their bounds of tracking errors shown in \cite{sun2017distributed,xi2016distributed,liu2017decentralized,simonetto2017decentralized,ling2013decentralized,mokhtari2016decentralized,simonetto2014double} since these bounds are derived under different assumptions, and the effects of some important factors such as the network topology are not adequately investigated.

This paper studies and compares the performance of two classical gradient-based methods for decentralized dynamic optimization: one is diffusion and the other is DGT, which are popular primal and primal-dual methods, respectively. 
We establish their convergence properties and show how the drift of optimal solution $\tilde{x}^{k*}$, the drift of optimal gradient $\nabla f_i(\tilde{x}^{k*})$, and the magnitude of optimal gradient $\nabla f_i(\tilde{x}^{k*})$ affect the steady-state tracking performance of both algorithms. In particular, we also explicitly show the influence of network topology on both diffusion and DGT, which, to the best of our knowledge, is the first result to reveal how the network topology affects the steady-state tracking error in decentralized dynamic optimization. With the derived bounds of steady-state tracking errors, we find primal methods {\em can} outperform primal-dual ones and identify conditions under which one family is superior to the other. This sheds lights on how to choose between the primal and primal-dual methods in different decentralized dynamic optimization scenarios.

\subsection{Main results}
To be specific, we will prove in Section \ref{sec:dgd} that with a proper step-size $\alpha = O(1-\beta)$ where $\beta \in (0, 1)$ measures the connectivity of network topology, diffusion converges exponentially fast to a neighborhood around the dynamic optimal solution $\tilde{x}^{k*}$. The steady-state tracking error of diffusion can be characterized as
\begin{align}\label{dgd-bound}
\mbox{diffusion:}\quad\limsup_{k\to \infty} \left(\frac{1}{n}\sum_{i=1}^n\|x_i^k - \tilde{x}^{k*}\|^2\right)^{\frac{1}{2}} \nonumber \\
 = O\left(\frac{\Delta_x}{1-\beta}\right) +  O(\beta D),
\end{align}
where constant $\Delta_x$ measures the drifting rate of $\tilde{x}^{k*}$, and constant $D$ is the upper bound of optimal gradients, i.e., $\|\nabla f_i^k(\tilde{x}^{k*})\| \le D$, $\forall i$. When $\Delta_x = 0$ which corresponds to the scenario of static deterministic optimization, the steady-state performance in \eqref{dgd-bound} reduces to that of static deterministic diffusion \cite{chen2013distributed,sayed2014adaptation,sayed2014adaptive}.

In contrast, we will show in Section \ref{sec:extra} that DGT also converges exponentially fast to a neighborhood around $\tilde{x}^{k*}$, albeit with a smaller step-size $\alpha = O((1-\beta)^2)$. The steady-state tracking error of DGT can be characterized as
\begin{align}\label{gt-bound-3}
\mbox{DGT:}\quad \limsup_{k\to \infty} \left(\frac{1}{n}\sum_{i=1}^n\|x_i^k - \tilde{x}^{k*}\|^2\right)^{\frac{1}{2}} \nonumber \\
 = O\left(\frac{\Delta_x}{(1-\beta)^2}\right) + O(\beta \Delta_g),
\end{align}
where $\Delta_g$ measures the drifting rate of optimal gradients $\nabla f_i^k(\tilde{x}^{k*})$, $\forall i$. When $\Delta_x = 0$ and $\Delta_g = 0$ which corresponds to the scenario of static deterministic optimization, \eqref{gt-bound-3} implies that DGT will converge exactly, which is consistent with the performance of static deterministic DGT \cite{xu2015augmented,di2016next,nedic2017achieving,qu2018harnessing}.

Comparing \eqref{dgd-bound} and \eqref{gt-bound-3}, we observe that while DGT removes the effect of the optimal gradients' upper bound $D$, it incurs a new error related to its drifting rate $\Delta_g$. This result is different from the static (both deterministic and stochastic) scenario  in which the primal-dual approaches completely eliminate the limiting errors suffered by primal methods without introducing any new bias. Further, a badly-connected network with $\beta$ close to $1$ has larger negative effect on DGT than on diffusion. This conclusion is also in contrast to the static stochastic scenario where the primal approaches are more affected by a badly-connected topology than the primal-dual ones. With \eqref{dgd-bound} and \eqref{gt-bound-3}, it is evident that whether diffusion or DGT performs better highly depends on the values of $\Delta_x$, $\Delta_g$, $\beta$ and $D$. {\em The primal approaches can therefore outperform the primal-dual ones in certain scenarios of decentralized dynamic optimization.}

\begin{table*}[t]
	\centering
		\caption{Comparison between diffusion and DGT on steady-state performance for different scenarios.}
	\begin{tabular}{|c|c|c|c|}
		\hline
		\textbf{Scenario}                                    & \textbf{Tracking Error Bound of Diffusion}                  & \textbf{Tracking Error Bound of DGT}                       & \textbf{Better Algorithm} \\ \hline
		$\Delta_x \gg D, \Delta_x \gg \Delta_g$               & $O(\frac{\Delta_x}{1-\beta})$ & $O(\frac{\Delta_x}{(1-\beta)^2})$ & diffusion                       \\ \hline
		$\Delta_x \ll D, \Delta_x \ll \Delta_g, D < \Delta_g$ & $O(D)$                        & $O(\Delta_g)$                     & diffusion                       \\ \hline
		$\Delta_x \ll D, \Delta_x \ll \Delta_g, D > \Delta_g$ & $O(D)$                        & $O(\Delta_g)$                     & DGT                       \\ \hline
	\end{tabular}
	\label{tab:my-table}
\end{table*}

The bounds of steady-state tracking errors given in \eqref{dgd-bound} and \eqref{gt-bound-3}, which we summarize in Table \ref{tab:my-table}, provide guidelines on how to choose between primal and primal-dual methods in  different applications. In the numerical experiments, we will design several delicate examples, in which the quantities $\Delta_x$, $\Delta_g$, $D$ and $\beta$ are controlled, to validate the derived bounds.


\subsection{Other related works}
Dynamic optimization can also be used to formulate the online learning problem, which aims at minimizing a long-term objective in an online manner. For example, the work of \cite{towfic2013distributed} studies the decentralized online classification problem with non-stationary data samples and establishes the convergence property of online diffusion that is similar to the one we derive for dynamic diffusion in Section \ref{sec:dgd}. However, it is assumed in \cite{towfic2013distributed} that the objective functions are twice-differentiable and that the time-varying optimal solution $\tilde{x}^{k*}$ follows a random walk drifting pattern, which are more stringent than the assumptions in this paper. Also, the bound of steady-state tracking error established in \cite{towfic2013distributed} cannot reflect the influence of network topology.

There are some recent primal algorithms that can reach exact convergence for decentralized static optimization by conducting an adaptively increasing number of communication steps per time; see \cite{berahas2018balancing} and \cite{li2018sharp}. However, these methods are not suitable for the dynamic problem, since the introduced multiple inner communication rounds will weaken or even turn off the tracking or adaptation abilities if the dynamic optimal solution changes drastically.

\subsection{Notations}
For a vector $a$, $\|a\|$ stands for the $\ell_2$-norm of $a$. For a matrix $A$, $\|A\|$ and $\rho(A)$ stand for the Frobenius and spectral norms of $A$, respectively. We say $A$ is stable if $\rho(A) < 1$. $\mathds{1}_n \in \R^n$ is the vector with all ones and $I_d \in \R^{d \times d}$ is the identity matrix.



\section{Algorithm Review and Assumptions}
\label{sec:algo}
Consider a bidirectionally connected network of $n$ agents which can communicate with their neighbors. These agents cooperatively solve the decentralized dynamic optimization problem in the form of \eqref{eq1}, with the dynamic versions of diffusion and DGT.

Let $W\in\R^{n\times n}$ be a doubly stochastic matrix, i.e., $W \geq 0$, $W\mathds{1}_n = \mathds{1}_n$ and $W^T\mathds{1}_n = \mathds{1}_n$. Note that $W$ can be non-symmetric. The $(i,j)$-th element $W_{ij}$ is the weight to scale information flowing from agent $j$ to agent $i$. If agents $i$ and $j$ are not neighbors then $W_{ij} = 0$, and if they are neighbors or identical then the weight $W_{ij} \ge 0$. Furthermore, we define $\mathcal{N}_i$ as the set of neighbors of agent $i$ which also includes agent $i$ itself. The diffusion method \cite{chen2013distributed,sayed2014adaptation,sayed2014adaptive} can be used to solve \eqref{eq1} as follows. At time $k+1$, each agent $i$ will conduct
\begin{equation}\label{eq2_1}
x^{k+1}_i=\sum_{j\in \mathcal{N}_i} W_{ij}\big(x_j^k-\A\N f_j^{k+1}(x^k_j)\big), \quad \forall i,
\end{equation}
in parallel, where $x_i$ is the local variable kept by agent $i$. We employ a constant step-size $\alpha$ to keep track of the dynamics of time-varying objective functions. Since $W_{ij} > 0$ holds only for connected agents $i$ and $j$, recursion \eqref{eq2_1} can be implemented in a decentralized manner. The diffusion updates are listed in Algorithm 1. It is expected that each local variable $x_i^k$ will track the dynamic optimal solution $\tilde{x}^{k*}$.

\begin{algorithm}[H]
	\caption{Dynamic diffusion} \hspace*{0.02in} {\bf Input:} Initialize $x_i^0 \in \mathbb{R}^d$, $\forall i$; set $\alpha>0$.
	
	\begin{algorithmic}[1]
		\FOR{$k = 0, 1, \cdots$, every agent $i=1,\cdots,n$}
        \STATE Observe $f_i^{k+1}$ and compute $\phi_i^{k+1} =\hspace{-0.3mm} x_i^{k} \hspace{-0.3mm}-\hspace{-0.3mm} \A\N f_i^{k+1}\hspace{-0.1mm}(\hspace{-0.1mm}x^k_i\hspace{-0.1mm})$
		\STATE Spread $\phi_i^{k+1}$ to and collect $\phi_j^{k+1}$ from neighbors
        \STATE Update local iterate $x^{k+1}_i = \sum_{j\in \mathcal{N}_i} W_{ij} \phi^{k+1}_j$
		\ENDFOR
	\end{algorithmic}
\end{algorithm}

When $f_i^k$ is exactly known and remains unchanged across time, recursion \eqref{eq2_1} reduces to static deterministic diffusion. It has been known that static determinstic diffusion cannot converge exactly to the optimal solution with a constant step-size. Instead, it will converge to a neighborhood around the optimal solution\cite{nedic2009distributed,yuan2016convergence,sayed2014adaptive,sayed2014adaptation,chen2013distributed}. To correct such a steady-state error, one can refer to DGT \cite{xu2015augmented,di2016next,nedic2017achieving,qu2018harnessing}. To solve the decentralized static problem, DGT employs a dynamic average consensus method \cite{zhu2010discrete} to estimate the global gradient and hence removes the limiting bias. Now we adapt it to solve the dynamic problem \eqref{eq1}. In the initialization stage, we let $y_i^0 := \nabla f_i^0(x_i^0)$. At time $k+1$, each agent $i$ will conduct
\begin{align}
x_i^{k+1} &= \sum_{j\in\mathcal{N}_i} W_{ij}\big(x_j^k - \alpha y_j^k\big), \label{diging-11}\\
y_i^{k+1} &= \sum_{j\in\mathcal{N}_i} W_{ij}y_j^k + \nabla f_i^{k+1}(x_i^{k+1}) - \nabla f_i^{k}(x_i^{k}) \label{diging-22}.
\end{align}
In the above recursion, \eqref{diging-22} is called gradient tracking and enables $y_i^k$ to approximate the global gradient asymptotically. The DGT method is listed in Algorithm 2.

We next introduce some notations and common assumptions to facilitate the convergence analysis of dynamic diffusion and DGT. Let $\mathcal{G}(\mathcal{V}, \mathcal{E})$ denote the network where $\mathcal{V}$ is the set of all nodes and $\mathcal{E}$ is the set of all edges. Define $\vx := [x_1;\cdots;x_n]\in \R^{nd}$ be a stack of $x_i \in \R^d$ for $i=1, \cdots,n$, and $\tilde{\vx}^{k*} := [\tilde{x}^{k*};\cdots;\tilde{x}^{k*}]\in \R^{nd}$ be a stack of $\tilde{x}^{k*}\in \R^d$. Also define $F^k(\vx) := \sum_{i=1}^{n}f_i^k(x_i)$. Furthermore, we let $\mathbf{W}:= W \otimes I_d \in \R^{nd \times nd}$ where ``$\otimes$'' indicates the Kronecker product. The following three assumptions are standard in literature.

\begin{algorithm}[t]
	\caption{Dynamic decentralized gradient tracking (DGT)} \hspace*{0.02in}
    {\bf Input:} Initialize $x_i^0 \in \R^d$, $y_i^0 = \N f_i^0(x_i^0)$, $\forall i$; set $\alpha>0$.

	\begin{algorithmic}[1]
		\FOR{$k = 0, 1, \cdots$, every agent $i=1,\cdots,n$}
		\STATE Spread $x^k_i$, $y_i^k$ and collect $x^k_j$, $y^k_j$ from neighbors
		\STATE Observe local objective function $f_i^{k+1}$
		\STATE Update $x^{k+1}_i$ and $y_i^{k+1}$ according to \eqref{diging-11} and \eqref{diging-22}
		\ENDFOR
	\end{algorithmic}
\end{algorithm}

\begin{assumption}[\sc Weight matrix] \label{ass-weight}
	The network is strongly connected and the weight matrix $W\in \R^{n \times n}$ satisfies the following conditions
\begin{align}
	W^T \mathds{1}_n = \mathds{1}_n, \ \ W\mathds{1}_n = \mathds{1}_n, \ \ \mathrm{null}(I - W) = \mathrm{span}(\mathds{1}_n). \nonumber
\end{align}
\end{assumption}
Sort the magnitudes of $W$'s eigenvalues in a decreasing order $|\lambda_1(W)|, |\lambda_2(W)|, \cdots, |\lambda_n(W)|$. Assumption \ref{ass-weight} implies that $1 = |\lambda_1(W)| > |\lambda_2(W)| \ge \cdots \ge |\lambda_n(W)| \geq 0$. In this paper we define $\beta:=|\lambda_2(W)|$.

\begin{assumption}[\sc Smoothness]\label{ass-smooth}
	Each $f_i^k(\tilde{x})$ is convex and has Lipschitz continuous gradients with constant $L_i^k >0$, i.e., it holds for any $\tilde{x}\in \R^d$ and $\tilde{y} \in \R^d$ that
	 \begin{align}\label{ineq-smoothness}
	 	\|\N f_i^k(\tilde{x})-\N f_i^k(\tilde{y})\| \leq L^k_i\|\tilde{x}-\tilde{y}\|, \quad \forall i \in \mathcal{V},\ \forall k.
	 \end{align}
	 Moreover, we assume $L_i^k \le L$ for all $i$ and $k$ where $L>0$ is a constant.
\end{assumption}

\begin{assumption}[\sc Strong convexity] \label{ass-strong}
	Each $f_i^k(\tilde{x})$ is strongly convex with constant $\mu^k_i>0$, i.e., it holds for any $\tilde{x}\in \R^d$ and $\tilde{y} \in \R^d$ that
	\begin{align}
		\LA\N f_i^k(\tilde{x})-\N f_i^k(\tilde{y}),\tilde{x}-\tilde{y}\RA \geq \mu^k_i\|\tilde{x}-\tilde{y}\|^2, \ \forall i \in \mathcal{V},\ \forall k. \nonumber
	\end{align}
	Moreover, we assume $\mu_i^k \ge \mu$ for all $i$ and $k$ where $\mu >0$ is a constant.
\end{assumption}

The following assumption requires that the drift of the dynamic optimal solution is upper-bounded, i.e., $\tilde{x}^{k*}$ changes slowly enough. The constant $\Delta_x$ in Assumption \ref{ass-x-movement} characterizes the drifting rate of $\tilde{x}^{k*}$. Note that we do not assume any specific drifting patterns that $\tilde{x}^{k*}$ has to follow, which is different from \cite{towfic2013distributed} that assumes $\tilde{x}^{k*}$ to follow a random walk model.

\begin{assumption}[\sc Smooth movement of $\tilde{x}^{k*}$] \label{ass-x-movement}
	We assume $\|\tilde{x}^{(k+1)*}-\tilde{x}^{k*}\|\leq \DE_x$ for all times $k=0,1,\cdots$ where $\DE_x > 0$ is a constant.
\end{assumption}

\section{Convergence analysis of dynamic diffusion}
\label{sec:dgd}

In this section we provide convergence analysis for the dynamic diffusion method given in recursion \eqref{eq2_1}. Although there exist results in literature on the convergence of gradient-based dynamic primal methods, these results are either established under more stringent assumptions or ignore the effects of some important influencing factors. We will leave comparisons with the existing results in literature to Remark \ref{remark-dgd-comparison}.

To analyze dynamic diffusion, we will assume the time-varying gradient $\N f_i^k(\tilde{x}^{k*})$ at the \textit{optimal solution} $\tilde{x}^{k*}$ is upper-bounded for any $i$ and $k$. This assumption is much milder than the one used in many existing works (e.g.,  \cite{simonetto2016class,xi2016distributed,simonetto2017decentralized}) that requires the gradient to have a uniform upper bound at \textit{every point} $\tilde{x}$, i.e., $\|\N f_i^k(\tilde{x})\| \le D$.

\begin{assumption}[\sc Boundedness of $\N f_i^k(\tilde{x}^{k*})$] \label{ass-grad-bound}
	We assume $\frac{1}{\sqrt{n}}\sum_{i=1}^n\|\N f_i^k(\tilde{x}^{k*})\| \le D$ for all times $k=0,1,\cdots$ where $D > 0$ is a constant.
\end{assumption}

With the definition of $F(\vx)$ and $\vW$ in Section \ref{sec:algo}, we can rewrite the dynamic diffusion recursion \eqref{eq2_1} as
\begin{align}\label{dynamic-DGD-2}
\vx^{k+1} = \vW \big( \vx^k - \alpha \N F^{k+1}(\vx^k) \big).
\end{align}
By left-multiplying $\frac{1}{n}\mathds{1}^T \otimes I_d$ to both sides of \eqref{dynamic-DGD-2} and defining $\bar{x} := \frac{1}{n} \sum_{i=1}^n x_i = \frac{1}{n}\mathds{1}^T \otimes I_d \vx$, we reach
\begin{align}\label{eq-DGD-avg}
	\bar{x}^{k+1} = \bar{x}^k -  \frac{\alpha}{n}\sum_{i=1}^{n}\nabla f_i^{k+1}(x_i^k).
\end{align}
Define $\bar{\vx}^k:=[\bar{x}^k;\cdots;\bar{x}^k]\in \R^{nd}$. Note that recursion \eqref{eq-DGD-avg} can be rewritten as
\begin{align}\label{238sdghdsb9}
\bar{\vx}^{k+1} = & \bar{\vx}^k -  \frac{\alpha}{n} (\mathds{1}_n\mathds{1}_n^T \otimes I_d)\nabla F^{k+1}(\vx^k) \nonumber \\
                = & \bar{\vx}^k -  \alpha \vR \nabla F^{k+1}(\vx^k),
\end{align}
where $R = \frac{1}{n}\mathds{1}_n\mathds{1}_n^T$ and $\vR = R \otimes I_d \in \mathbb{R}^{dn\times dn}$. It follows that $|\lambda_1(R)|=1$ and $|\lambda_2(R)| = \cdots = |\lambda_n(R)| =0$.

In the following lemmas, we will investigate two distances $\|\bar{\vx}^k - \tilde{\vx}^{k*}\|$ and $\|\vx^k - \bar{\vx}^k\|$ to facilitate analyzing the convergence of dynamic diffusion. Here  $\tilde{\vx}^{k*} := [\tilde{x}^{k*};\cdots;\tilde{x}^{k*}]\in \R^{nd}$ as we have defined in Section \ref{sec:algo}.

\begin{lemma}\label{lm-DGD-inequality-1}
Under Assumptions \ref{ass-smooth}--\ref{ass-x-movement}, if step-size $\alpha\le \frac{2}{\mu + L}$, it holds that
\begin{align}\label{s234hds8-2}
\|\bar{\vx}^{k+1} - \tilde{\vx}^{(k+1)*}\| &\le \Big(1 - \frac{\alpha \mu}{2}\Big) \|\bar{\vx}^k  \hspace{-0.8mm}-\hspace{-0.8mm} \tilde{\vx}^{k*}\| \nonumber \\
&\hspace{-1cm} + \alpha L \| \vx^k - \bar{\vx}^k \| + \left(1 - \frac{\alpha \mu}{2}\right) \sqrt{n} \Delta_x.
\end{align}
\end{lemma}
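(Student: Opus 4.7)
The plan is to adapt the standard contraction analysis of gradient descent for a strongly convex, smooth objective, accounting for two new features relative to the centralized case: the gradient is evaluated at local iterates $x_i^k$ instead of the mean $\bar{x}^k$, and the reference point $\TI{x}^{(k+1)*}$ itself drifts from $\TI{x}^{k*}$.

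First, I would start from the mean recursion \eqref{238sdghdsb9}, written componentwise as $\bar{x}^{k+1} = \bar{x}^k - \frac{\alpha}{n}\sum_{i=1}^n \N f_i^{k+1}(x_i^k)$. I would center around $\TI{x}^{(k+1)*}$ rather than $\TI{x}^{k*}$, since it is the former that satisfies the first-order optimality condition $\sum_i \N f_i^{k+1}(\TI{x}^{(k+1)*}) = 0$. Adding and subtracting $\frac{\alpha}{n}\sum_i \N f_i^{k+1}(\bar{x}^k)$ gives the decomposition
\[
\bar{x}^{k+1} - \TI{x}^{(k+1)*} = \underbrace{\bar{x}^k - \alpha \bar{h}^{k+1}(\bar{x}^k) - \TI{x}^{(k+1)*}}_{\text{centralized GD step}} + \underbrace{\tfrac{\alpha}{n}\sum_{i=1}^n\bigl[\N f_i^{k+1}(\bar{x}^k) - \N f_i^{k+1}(x_i^k)\bigr]}_{\text{disagreement error}},
\]
where $\bar{h}^{k+1}(x) := \frac{1}{n}\sum_i \N f_i^{k+1}(x)$ is itself $\mu$-strongly convex and $L$-smooth by Assumptions \ref{ass-smooth}--\ref{ass-strong}.

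Next, I would bound the two summands separately. For the centralized step I would invoke the textbook contraction: whenever $\alpha \le \frac{2}{\mu + L}$, $\|x - \alpha \N f(x) - x^*\| \le \sqrt{1 - \frac{2\alpha\mu L}{\mu + L}}\,\|x - x^*\|$; using $L \ge \mu$ to simplify $\frac{2\mu L}{\mu+L} \ge \mu$, together with $\sqrt{1-t} \le 1 - t/2$, yields the cleaner contraction factor $1 - \alpha\mu/2$. For the disagreement term, $L$-smoothness of each $f_i^{k+1}$ together with a Cauchy--Schwarz bound on the average of norms gives $\bigl\|\tfrac{1}{n}\sum_i [\N f_i^{k+1}(\bar{x}^k) - \N f_i^{k+1}(x_i^k)]\bigr\| \le \frac{L}{\sqrt{n}}\|\vx^k - \bar{\vx}^k\|$.

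Finally, I would rescale from per-agent to stacked quantities via $\|\bar{\vx} - \TI{\vx}^*\| = \sqrt{n}\|\bar{x} - \TI{x}^*\|$, and then import the optimal-solution drift by a triangle inequality $\|\bar{\vx}^k - \TI{\vx}^{(k+1)*}\| \le \|\bar{\vx}^k - \TI{\vx}^{k*}\| + \sqrt{n}\DE_x$ using Assumption \ref{ass-x-movement}. Combining these pieces reproduces \eqref{s234hds8-2} exactly. No step is technically delicate; the only conceptual pitfall is to choose the correct center point. Expanding around $\TI{x}^{k*}$ would break the contraction because $\bar{h}^{k+1}(\TI{x}^{k*}) \ne 0$ in general, whereas centering around $\TI{x}^{(k+1)*}$ and deferring the drift to the closing triangle inequality is what produces the clean prefactor $1 - \alpha\mu/2$ in front of $\sqrt{n}\DE_x$.
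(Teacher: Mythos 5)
Your proposal is correct and follows essentially the same route as the paper: the same decomposition of the mean recursion around $\tilde{x}^{(k+1)*}$ into a centralized gradient step plus a disagreement error, the same strong-convexity/smoothness contraction (the paper completes the square to get $(1-\tfrac{\alpha\mu L}{\mu+L})^2 \le (1-\tfrac{\alpha\mu}{2})^2$ where you use $\sqrt{1-t}\le 1-t/2$, an immaterial difference), and the same closing triangle inequality to absorb the drift $\Delta_x$.
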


\begin{proof} See Appendix \ref{app-1}.
\end{proof}

\begin{lemma}\label{lm-DGD-inequality-2}
Under Assumptions \ref{ass-weight}, \ref{ass-smooth}, \ref{ass-x-movement} and \ref{ass-grad-bound}, if step-size $\alpha \le \frac{2}{\mu + L}$, it holds that
\begin{align}\label{238sdbm20-0}
\|\vx^{k+1} - \bar{\vx}^{k+1}\| &{\le} \beta \|\vx^k - \bar{\vx}^k\| + \alpha \beta L \|\bar{\vx}^k - \tilde{\vx}^{k*}\| \nonumber \\
& + \alpha \beta L \sqrt{n} \Delta_x + \alpha \beta \sqrt{n} D.
\end{align}
\end{lemma}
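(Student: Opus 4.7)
The plan is to combine the mixing property of $\vW$ relative to $\vR$ with a Lipschitz-based bound on the gradient norm evaluated at the current iterate. First, I will subtract the averaged recursion \eqref{238sdghdsb9} from the compact form \eqref{dynamic-DGD-2} of dynamic diffusion. Using $\vW\bar{\vx}^k = \bar{\vx}^k$ (implied by $W\mathds{1}_n = \mathds{1}_n$ in Assumption \ref{ass-weight}) together with $\vR\bar{\vx}^k = \bar{\vx}^k$, and observing that $(\vW - \vR)\bar{\vx}^k = 0$, I obtain the identity
\begin{align*}
\vx^{k+1} - \bar{\vx}^{k+1} = (\vW - \vR)(\vx^k - \bar{\vx}^k) - \alpha(\vW - \vR)\nabla F^{k+1}(\vx^k).
\end{align*}

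Next, I will invoke the mixing property of the doubly-stochastic matrix: Assumption \ref{ass-weight}, together with the definition $\beta := |\lambda_2(W)|$, yields $\|(\vW - \vR)\vu\| \le \beta\|\vu\|$ for every $\vu \in \R^{nd}$. Applying this inequality to each term on the right-hand side and using the triangle inequality leads to
\begin{align*}
\|\vx^{k+1} - \bar{\vx}^{k+1}\| \le \beta\|\vx^k - \bar{\vx}^k\| + \alpha\beta\|\nabla F^{k+1}(\vx^k)\|.
\end{align*}

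It remains to bound $\|\nabla F^{k+1}(\vx^k)\|$ by inserting $\pm\nabla F^{k+1}(\tilde{\vx}^{(k+1)*})$. Assumption \ref{ass-smooth} gives $\|\nabla F^{k+1}(\vx^k) - \nabla F^{k+1}(\tilde{\vx}^{(k+1)*})\| \le L\|\vx^k - \tilde{\vx}^{(k+1)*}\|$, while Assumption \ref{ass-grad-bound} combined with the inequality $\sqrt{\sum_i a_i^2} \le \sum_i |a_i|$ yields $\|\nabla F^{k+1}(\tilde{\vx}^{(k+1)*})\| \le \sqrt{n}\, D$. Then I invoke the splitting $\|\vx^k - \tilde{\vx}^{(k+1)*}\| \le \|\vx^k - \bar{\vx}^k\| + \|\bar{\vx}^k - \tilde{\vx}^{k*}\| + \|\tilde{\vx}^{k*} - \tilde{\vx}^{(k+1)*}\|$, with the last term controlled by $\sqrt{n}\Delta_x$ via Assumption \ref{ass-x-movement}. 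Substituting these pieces and collecting terms produces the claim.

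The subtle step is that this natural Lipschitz expansion contributes an extra $\alpha\beta L\|\vx^k - \bar{\vx}^k\|$, which at first glance makes the coefficient in front of $\|\vx^k - \bar{\vx}^k\|$ read as $\beta(1+\alpha L)$ rather than the clean $\beta$ appearing in \eqref{238sdbm20-0}. Reconciling this, either by absorbing the small surplus under the step-size restriction $\alpha \le 2/(\mu+L)$ or by a more refined centering of $(\vW - \vR)\nabla F^{k+1}(\vx^k)$ against a consensus vector before applying the mixing bound, is the delicate bookkeeping step I would scrutinize most carefully when writing the full proof.
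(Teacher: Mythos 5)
Your skeleton is the same as the paper's: the identity $\vx^{k+1} - \bar{\vx}^{k+1} = (\vW-\vR)(\vx^k - \bar{\vx}^k) - \alpha(\vW-\vR)\nabla F^{k+1}(\vx^k)$, the mixing bound $\rho(\vW-\vR)=\beta$, and the insertion of $\pm\nabla F^{k+1}(\tilde{\vx}^{(k+1)*})$ to bring in Assumptions \ref{ass-x-movement} and \ref{ass-grad-bound}. The gap is exactly the step you flag at the end, and it is not closed by what you wrote. Your first proposed remedy --- absorbing the surplus $\alpha\beta L\|\vx^k-\bar{\vx}^k\|$ under the restriction $\alpha\le 2/(\mu+L)$ --- cannot work: $\beta(1+\alpha L)>\beta$ for every $\alpha>0$, so no step-size condition converts your coefficient into the stated $\beta$. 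Carrying $\beta(1+\alpha L)$ forward would prove a different (weaker) lemma than \eqref{238sdbm20-0}.

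The correct resolution is your second option, which is what the paper does: decompose $\nabla F^{k+1}(\vx^k)=[\nabla F^{k+1}(\vx^k)-\nabla F^{k+1}(\bar{\vx}^k)]+\nabla F^{k+1}(\bar{\vx}^k)$ \emph{before} taking norms, and crucially keep the centered difference bundled with the consensus error inside a single norm, establishing
\begin{align*}
\big\|\vx^k-\bar{\vx}^k-\alpha[\nabla F^{k+1}(\vx^k)-\nabla F^{k+1}(\bar{\vx}^k)]\big\|\le \|\vx^k-\bar{\vx}^k\|
\end{align*}
instead of splitting it by the triangle inequality. This non-expansiveness holds blockwise: for each $i$, co-coercivity of the convex, $L$-smooth $f_i^{k+1}$ gives $\|x_i^k-\bar{x}^k-\alpha[\nabla f_i^{k+1}(x_i^k)-\nabla f_i^{k+1}(\bar{x}^k)]\|^2\le \|x_i^k-\bar{x}^k\|^2-(\tfrac{2\alpha}{L}-\alpha^2)\|\nabla f_i^{k+1}(x_i^k)-\nabla f_i^{k+1}(\bar{x}^k)\|^2\le\|x_i^k-\bar{x}^k\|^2$ whenever $\alpha\le 2/(\mu+L)\le 2/L$ (the same computation as in the proof of Lemma \ref{lm-DGD-inequality-1}); summing over $i$ yields the claim. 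The remaining term $\alpha\beta\|\nabla F^{k+1}(\bar{\vx}^k)\|$ is then bounded exactly as you bound $\alpha\beta\|\nabla F^{k+1}(\vx^k)\|$, except that the Lipschitz expansion now starts from $\bar{\vx}^k$ and therefore contributes $L\|\bar{\vx}^k-\tilde{\vx}^{k*}\|+L\sqrt{n}\Delta_x+\sqrt{n}D$ with no extra $\|\vx^k-\bar{\vx}^k\|$ term. Until you commit to this centering-plus-non-expansiveness step, your argument only establishes the coefficient $\beta(1+\alpha L)$.
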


\begin{proof} See Appendix \ref{app-2}.

\end{proof}

With Lemmas \ref{lm-DGD-inequality-1} and \ref{lm-DGD-inequality-2}, if $\alpha \le \frac{2}{\mu + L}$, we have
\begin{align}\label{DGD-main-recursion}
\hspace{-0.5em}\underbrace{\ba{c}
\|\bar{\vx}^{k+1} - \tilde{\vx}^{(k+1)*}\| \\
\|\vx^{k+1} - \bar{\vx}^{k+1}\|
\ea}_{:= {\boldsymbol z}^{k+1}} &\le \underbrace{\ba{cc}
	1 - \frac{\alpha \mu}{2} & \alpha L \\
	\alpha \beta L & \beta
	\ea}_{:=A}
\underbrace{\ba{c}
\|\bar{\vx}^{k} - \tilde{\vx}^{k*}\| \\
\|\vx^{k} - \bar{\vx}^{k}\|
\ea}_{:= {\boldsymbol z}^k} \nonumber \\
& +
\underbrace{\ba{c}
\left(1 - \frac{\alpha \mu}{2}\right) \sqrt{n} \Delta_x \\
\alpha \beta L \sqrt{n} \Delta_x + \alpha \beta \sqrt{n} D
\ea}_{:= {\boldsymbol b}}.
\end{align}
In the following lemma, we examine $\rho(A)$, the spectral norm of matrix $A$. With it, we reach the main result in Theorem \ref{thm-1}.

\begin{lemma}\label{lm-rho-A-diffusion}
When step-size $\alpha \le \frac{\mu(1-\beta)}{10L^2} = O(\frac{\mu(1-\beta)}{L^2})$, it holds that $\rho(A) \leq 1- \frac{3\mu \alpha}{8} = 1 - O(\mu \alpha) < 1$.
\end{lemma}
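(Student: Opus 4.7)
The plan is to compute the eigenvalues of the $2\times 2$ matrix $A$ directly and then bound the larger one. Since $A$ has non-negative entries, by Perron-Frobenius its spectral radius equals its largest (real, positive) eigenvalue, so it suffices to upper-bound
\[
\lambda_+(A) = \frac{\mathrm{tr}(A) + \sqrt{\mathrm{tr}(A)^2 - 4\det(A)}}{2}.
\]
A direct expansion with $\mathrm{tr}(A) = 1 - \alpha\mu/2 + \beta$ and $\det(A) = (1-\alpha\mu/2)\beta - \alpha^2\beta L^2$ gives the convenient identity
\[
\mathrm{tr}(A)^2 - 4\det(A) \;=\; \bigl(1 - \tfrac{\alpha\mu}{2} - \beta\bigr)^2 + 4\alpha^2\beta L^2,
\]
so the discriminant is naturally a sum of squares.

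Next, I would check that under the step-size hypothesis the ``diagonal gap'' $a := 1 - \alpha\mu/2 - \beta$ is strictly positive. Using $\mu \le L$ and $\alpha \le \mu(1-\beta)/(10L^2)$, I would show $\alpha\mu/2 \le (1-\beta)/20$, so $a \ge \tfrac{19}{20}(1-\beta) > 0$. This lets me apply the elementary bound $\sqrt{a^2+b^2} \le a + b^2/(2a)$ (valid for $a>0$), which after substitution and simplification yields
\[
\lambda_+(A) \;\le\; 1 - \frac{\alpha\mu}{2} + \frac{\alpha^2\beta L^2}{\,1 - \alpha\mu/2 - \beta\,}.
\]

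The final step is to control the second term. Using $a \ge \tfrac{19}{20}(1-\beta)$ together with $\alpha \le \mu(1-\beta)/(10L^2)$ gives
\[
\frac{\alpha^2\beta L^2}{a} \;\le\; \frac{20\,\alpha L^2}{19(1-\beta)}\cdot\alpha \;\le\; \frac{2\alpha\mu}{19} \;\le\; \frac{\alpha\mu}{8},
\]
so $\lambda_+(A) \le 1 - \alpha\mu/2 + \alpha\mu/8 = 1 - 3\alpha\mu/8$, as claimed.

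The main obstacle is the last arithmetic step: one must choose the step-size denominator (here $10L^2$) large enough that the slack between $\alpha\mu/2$ and the target $3\alpha\mu/8$ absorbs the cross-coupling term $\alpha^2\beta L^2/a$. Cruder bounds such as $\sqrt{a^2+b^2}\le a+b$ would wash out the $\alpha\mu$ improvement, so the quadratic-in-$\alpha$ estimate via $\sqrt{a^2+b^2}\le a+b^2/(2a)$ is essential; everything else is bookkeeping about the characteristic polynomial of a $2\times 2$ matrix.
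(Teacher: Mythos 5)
Your proof is correct and follows essentially the same route as the paper's: both compute the characteristic polynomial of the $2\times 2$ matrix $A$, identify $\rho(A)$ with the larger (real, nonnegative) root, and bound the square root of the discriminant $\bigl(1-\tfrac{\alpha\mu}{2}-\beta\bigr)^2 + 4\alpha^2\beta L^2$ by $\bigl(1-\tfrac{\alpha\mu}{2}-\beta\bigr) + O(\alpha\mu)$. The only cosmetic difference is that the paper verifies the ansatz that the discriminant is at most $\bigl(1-\tfrac{\alpha\mu}{4}-\beta\bigr)^2$ by reducing it to a linear inequality in $\alpha$, whereas you reach the same estimate via $\sqrt{a^2+b^2}\le a+b^2/(2a)$; both yield $\rho(A)\le 1-\tfrac{3\alpha\mu}{8}$.
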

\begin{proof} See Appendix \ref{app-3}.
\end{proof}


\begin{theorem}\label{thm-1}
Under Assumptions \ref{ass-weight}--\ref{ass-grad-bound}, if step-size $\alpha \le \frac{\mu(1-\beta)}{10L^2} = O(\frac{\mu(1-\beta)}{L^2})$, it holds that the variable $\vx^k$ generated by dynamic diffusion recursion \eqref{dynamic-DGD-2} will converge exponentially fast, at rate $\rho(A) = 1 - O(\mu \alpha)$, to a neighborhood around $\tilde{\vx}^{k*}$. Moreover, the steady-state tracking error is bounded as
\begin{align}\label{eq-thm-dgd-error}
\limsup_{k\to \infty} \frac{1}{\sqrt{n}}\|\vx^k - \tilde{\vx}^{k*}\| \hspace{-1mm} \le  \hspace{-1mm} \left( \frac{4}{\alpha \mu}  \hspace{-0.8mm}+ \hspace{-0.8mm} \frac{4\beta L}{\mu(1-\beta)} \right)\Delta_x  \hspace{-0.8mm}+ \hspace{-0.8mm} \frac{6\alpha\beta L D}{\mu(1-\beta)}.
\end{align}
\end{theorem}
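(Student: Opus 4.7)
The plan is to treat the two-dimensional linear inequality \eqref{DGD-main-recursion}, namely $\boldsymbol{z}^{k+1} \le A\boldsymbol{z}^k + \boldsymbol{b}$, as a contractive fixed-point recursion and to extract both the contraction rate and the steady-state bias from its limit. Since $A$ and $\boldsymbol{b}$ have nonnegative entries and $\boldsymbol{z}^k$ is componentwise nonnegative by construction, I can unroll the inequality to $\boldsymbol{z}^k \le A^k \boldsymbol{z}^0 + \sum_{j=0}^{k-1} A^j \boldsymbol{b}$, where the inequality is read entrywise.

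Under the stated step-size bound, Lemma \ref{lm-rho-A-diffusion} already provides $\rho(A) \le 1 - \frac{3\mu\alpha}{8} < 1$. By Gelfand's formula, $A^k \boldsymbol{z}^0$ then decays exponentially at rate $\rho(A) = 1 - O(\mu\alpha)$, which gives the exponential-convergence claim of the theorem. Moreover, the Neumann series $\sum_{j=0}^\infty A^j$ converges to $(I-A)^{-1}$, and this inverse has nonnegative entries since $A \ge 0$ entrywise and $\rho(A) < 1$; passing to $\limsup$ therefore yields $\limsup_{k\to\infty} \boldsymbol{z}^k \le (I-A)^{-1}\boldsymbol{b}$ componentwise.

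Next I would compute $(I-A)^{-1}$ explicitly via the $2\times 2$ cofactor formula. Here $\det(I-A) = \frac{\alpha\mu(1-\beta)}{2} - \alpha^2\beta L^2$, and the step-size hypothesis $\alpha L^2 \le \frac{\mu(1-\beta)}{10}$ absorbs the correction term, giving the clean lower bound $\det(I-A) \ge \frac{2\alpha\mu(1-\beta)}{5}$. Plugging the resulting explicit inverse into $(I-A)^{-1}\boldsymbol{b}$ and then using the triangle inequality $\|\vx^k - \tilde{\vx}^{k*}\| \le \|\bar{\vx}^k - \tilde{\vx}^{k*}\| + \|\vx^k - \bar{\vx}^k\| = z_1^k + z_2^k$ transfers the componentwise bound to $\|\vx^k - \tilde{\vx}^{k*}\|$. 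Dividing by $\sqrt{n}$ and grouping the $\Delta_x$- and $D$-contributions coming from $\boldsymbol{b}$ should reproduce the right-hand side of \eqref{eq-thm-dgd-error}.

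The principal obstacle I expect is the constant tracking in this final algebraic step: the target coefficients $\frac{4}{\alpha\mu}$, $\frac{4\beta L}{\mu(1-\beta)}$, and $\frac{6\alpha\beta L}{\mu(1-\beta)}$ are tight enough that the step-size condition must be invoked at several points---most notably to control cross terms such as $\alpha^2\beta L^2$ and the factor $1-\alpha\mu/2$ that multiplies $\sqrt{n}\Delta_x$ inside $\boldsymbol{b}$---and a naive bound using $\|(I-A)^{-1}\|\cdot\|\boldsymbol{b}\|$ would produce noticeably worse constants. Everything else---the iteration of the vector inequality, the identification of the convergence rate with $\rho(A)$, and the very existence of the steady-state limit---then follows in a routine way from $\rho(A) < 1$ together with the entrywise nonnegativity of $A$ and $\boldsymbol{b}$.
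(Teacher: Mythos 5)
Your proposal is correct and follows essentially the same route as the paper's proof: iterate the vector inequality \eqref{DGD-main-recursion}, invoke Lemma \ref{lm-rho-A-diffusion} for $\rho(A)<1$ and the exponential rate, bound $(I-A)^{-1}$ entrywise via the explicit $2\times 2$ inverse with the determinant controlled by the step-size condition, and finish with $(I-A)^{-1}\boldsymbol{b}$ plus the triangle inequality. Your determinant bound $\det(I-A)\ge \frac{2\alpha\mu(1-\beta)}{5}$ is in fact slightly sharper than the paper's $\frac{\alpha\mu(1-\beta)}{4}$, and the final algebra (including the cancellation of the $-\frac{\alpha\mu}{2}$ correction against the $\alpha\beta L\Delta_x$ cross term) does go through to yield coefficients within those stated in \eqref{eq-thm-dgd-error}.
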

\begin{proof} See Appendix \ref{app-4}.
\end{proof}

\begin{remark} From inequality \eqref{eq-thm-dgd-error}, we further have
\begin{align}\label{dgd-steady-state-error}
 &\hspace{-1cm} \limsup_{k\to \infty} \left(\frac{1}{n}\sum_{i=1}^n\|x_i^k - \tilde{x}^{k*}\|^2\right)^{\frac{1}{2}}\nonumber \\
  =&\ O\left( \frac{\Delta_x}{\alpha} + \frac{\beta \Delta_x}{1-\beta} \right) +\ O\left(\frac{\alpha \beta D}{1-\beta}\right),
\end{align}
where we ignore the influences of constants $L$ and $\mu$.
This result implies that when the step-size $\alpha$ is sufficiently small, dynamic diffusion cannot effectively track the dynamic optimal solution $\tilde{\vx}^{k*}$ because of the term $O(\frac{\Delta_x}{\alpha})$, which is consistent with our intuition. On the other hand, when $\alpha$ is too large, the inherent bias term $O(\frac{\alpha \beta D}{1-\beta})$ will dominate and deteriorate the steady-state tracking performance.
\end{remark}

\begin{remark}
	If $f^k_i(\tilde{x}) \equiv f_i(\tilde{x})$ remains unchanged with time, we have $\Delta_x = 0$ and $\frac{1}{\sqrt{n}}\sum_{i=1}^n\|\nabla f_i(\tilde{x}^*)\| \le D$. In this scenario, inequality \eqref{eq-thm-dgd-error} implies
	\begin{align}
		\limsup_{k\to \infty} \left(\frac{1}{n}\sum_{i=1}^n\|x_i^k - \tilde{x}^{k*}\|^2\right)^{\frac{1}{2}} = O\left( \frac{\alpha \beta D}{1-\beta} \right),
	\end{align}
	which is consistent with the convergence property for static diffusion, as derived in \cite{chen2013distributed,sayed2014adaptation,sayed2014adaptive}.
\end{remark}

\begin{remark}\label{remark-dgd-comparison}
We compare the result in \eqref{dgd-steady-state-error} with the existing results for gradient-based decentralized dynamic primal algorithms in literature. First, the results in \cite{popkov2005gradient,towfic2013distributed,sun2017distributed,liu2017decentralized,simonetto2016class,simonetto2017decentralized} are established for either quadratic or twice-differentiable objective functions. While \cite{xi2016distributed} provides convergence analysis for first-order differentiable objective functions as this work, it has to assume the gradients are uniformly upper-bounded for all iterates. In comparison, we only assume that the optimal gradients are upper-bounded. Second, the bounds of steady-state tracking error derived in some of the existing works ignore the effects of certain influencing factors and are hence less precise. For example, \cite{sun2017distributed,xi2016distributed,liu2017decentralized,simonetto2016class,simonetto2017decentralized,popkov2005gradient} do not distinguish the dynamics-dependent tracking error $O(\frac{\Delta_x}{\alpha})$ from the intrinsic bias $O(\frac{\alpha \beta D}{1-\beta})$, which exists even for the decentralized static problem. Moreover, the influence of network topology is also ignored in these works.
\end{remark}

\begin{remark}
If the network is fully connected and $W = \frac{1}{n}\mathds{1}\mathds{1}^T$, it holds that $\beta=0$ and hence the bound given in \eqref{dgd-steady-state-error} reduces to
$
\limsup_{k\to \infty} \left(\frac{1}{n}\sum_{i=1}^n\|x_i^k - \tilde{x}^{k*}\|^2\right)^{\frac{1}{2}}=O\left( \frac{\Delta_x}{\alpha} \right),
$
which matches with the performance of centralized gradient descent for dynamic optimization. However, the bounds of decentralized dynamic gradient descent derived in \cite{sun2017distributed,xi2016distributed,liu2017decentralized} are worse than the bound of centralized gradient descent even if $\beta=0$.
\end{remark}

\section{Convergence Analysis of Dynamic DGT}
\label{sec:extra}

Decentralized gradient tracking (DGT) is a popular primal-dual approach proposed for decentralized static optimization. In this section we analyze the convergence property of its dynamic variant given in \eqref{diging-11}--\eqref{diging-22}. Instead of assuming the boundedness of $\nabla f_i^k(\tilde{x}^{k*})$ as in Assumption \ref{ass-grad-bound}, we assume that the movement of $\nabla f_i^k(\tilde{x}^{k*})$ is smooth for dynamic DGT.

\begin{assumption}[\sc Smooth movement of $\nabla f_i^k(\tilde{x}^{k*})$]\label{ass-g-movement}
	We assume $\frac{1}{\sqrt{n}}\sum_{i=1}^n\|\nabla f_i^{k+1}(\TI{x}^{(k+1)*}) - \nabla f_i^{k}(\TI{x}^{k*})\|\leq \DE_g$ for all times $k=0,1,\cdots$ where $\DE_g > 0$ is a constant.
\end{assumption}

We introduce $\vy := [y_1;\cdots;y_n]\in \mathbb{R}^{nd}$. With the definition of $F(\vx)$ in Section \ref{sec:algo}, we can rewrite recursion \eqref{diging-11}--\eqref{diging-22} as
\begin{align}
\vx^{k+1} &= \vW(\vx^k - \alpha \vy^k), \label{diging-1}\\
\vy^{k+1} &= \vW \vy^k + \nabla F^{k+1}(\vx^{k+1}) - \nabla F^{k}(\vx^{k}), \label{diging-2}
\end{align}
where $\vy^0 = \N F^0(\vx^0)$. In the following lemmas, we will use three quantities $\|\bar{\vx}^k - \tilde{\vx}^{k*}\|$, $\|{\vx}^k - \bar{\vx}^k\|$ and $\|{\vy}^k - \bar{\vy}^k\|$ (where $\bar{\vy}^k := [\bar{y}^k; \cdots; \bar{y}^k]\in \mathbb{R}^{nd}$ and $\bar{y}^k:=\frac{1}{n}\sum_{i=1}^{n}y_i^k$) to establish the convergence of dynamic DGT recursion \eqref{diging-1}--\eqref{diging-2}. To this end, we recall from Section \ref{sec:dgd} that $R=\frac{1}{n}\mathds{1}_n \mathds{1}_n^T$ and $\vR = R\otimes I_d \in \mathbb{R}^{dn\times dn}$. By left-multiplying $\vR$ to both sides of recursion \eqref{diging-2} we have
\begin{align} \label{23bs9987}
\bar{\vy}^{k+1} = \bar{\vy}^{k} + \vR\big(\nabla F^{k+1}(\vx^{k+1}) - \nabla F^{k}(\vx^{k})\big).
\end{align}
Since $\vy^0 = \nabla F^0(\vx^0)$ and hence $\bar{\vy}^0 = \vR \nabla F^0(\vx^0)$, for $k=1, 2, \cdots$ we can reach
\begin{align} \label{tttaaa}
\bar{\vy}^k = \vR\nabla F^{k}(\vx^k).
\end{align}

The following three lemmas characterize the evolution of $\|\vy^k - \bar{\vy}^k\|$, $\|\vx^{k} - \bar{\vx}^k\|$ and $\|\bar{\vx}^k - \tilde{\vx}^{k*}\|$, respectively.

\begin{lemma}\label{lm-diging-y-bary}
Under Assumptions \ref{ass-weight}, \ref{ass-smooth}, \ref{ass-x-movement} and \ref{ass-g-movement}, if step-size $\alpha \le \frac{1-\beta}{2L}$, it holds that
\begin{align}\label{bound-y}
\|\vy^{k+1} - \bar{\vy}^{k+1}\| &\le \frac{1+\beta}{2}\|\vy^k - \bar{\vy}^k\| + 5 L\|\vx^k - \bar{\vx}^k\| \nonumber \\
&\quad + 3 L\|\bar{\vx}^k \hspace{-1mm}-\hspace{-1mm} \tilde{\vx}^{k*}\|^2 \hspace{-1mm}+\hspace{-1mm}  L\sqrt{n}\Delta_x \hspace{-1mm}+\hspace{-1mm}  \sqrt{n}  \Delta_g.
\end{align}
\end{lemma}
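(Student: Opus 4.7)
The plan is to start from the gradient-tracking update \eqref{diging-2} and isolate the consensus error of $\vy^{k+1}$. Left-multiplying by $(I - \vR)$ and using the facts $\vR \vW = \vR$ and $(\vW - \vR)\bar{\vy}^k = 0$ (because $\bar{\vy}^k$ is consensus), I would rewrite
\[
\vy^{k+1} - \bar{\vy}^{k+1} = (\vW - \vR)(\vy^k - \bar{\vy}^k) + (I-\vR)\bigl(\nabla F^{k+1}(\vx^{k+1}) - \nabla F^{k}(\vx^{k})\bigr).
\]
Under Assumption \ref{ass-weight}, $\|\vW - \vR\| \le \beta$ and $\|I-\vR\| \le 1$, so a triangle inequality reduces the problem to bounding the gradient-difference term and collecting an $\alpha L$ coefficient on $\|\vy^k - \bar{\vy}^k\|$ that can later be absorbed into $\beta$ to produce $\tfrac{1+\beta}{2}$.

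Next, I would split the gradient difference through the two optimal points:
\[
\nabla F^{k+1}(\vx^{k+1}) - \nabla F^{k}(\vx^{k}) = \underbrace{\nabla F^{k+1}(\vx^{k+1}) - \nabla F^{k+1}(\vx^{k})}_{\text{Lipschitz in }\vx^{k+1}-\vx^k} + \underbrace{\nabla F^{k+1}(\vx^{k}) - \nabla F^{k}(\vx^{k})}_{\text{time drift at fixed }\vx^k},
\]
handling the second piece by inserting $\nabla F^{k+1}(\tilde{\vx}^{(k+1)*})$ and $\nabla F^{k}(\tilde{\vx}^{k*})$. Assumption \ref{ass-smooth} absorbs the two Lipschitz pieces into $L(\|\vx^k - \tilde{\vx}^{k*}\| + \sqrt{n}\Delta_x)$ and $L\|\vx^k - \tilde{\vx}^{k*}\|$, while Assumption \ref{ass-g-movement} bounds the pure time-drift piece by $\sqrt{n}\Delta_g$. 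A second triangle inequality splits $\|\vx^k - \tilde{\vx}^{k*}\| \le \|\vx^k - \bar{\vx}^k\| + \|\bar{\vx}^k - \tilde{\vx}^{k*}\|$.

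The main obstacle is controlling $\|\vx^{k+1} - \vx^k\|$ coming from the first Lipschitz term, because the DGT update \eqref{diging-1} makes it proportional to $\|\vy^k\|$ and we have no bound like Assumption \ref{ass-grad-bound} available. The key trick is to observe that by optimality, $\sum_i \nabla f_i^k(\tilde{x}^{k*}) = 0$, hence $\vR \nabla F^k(\tilde{\vx}^{k*}) = 0$. Combined with the identity $\bar{\vy}^k = \vR \nabla F^k(\vx^k)$ from \eqref{tttaaa}, this yields
\[
\|\bar{\vy}^k\| = \|\vR(\nabla F^k(\vx^k) - \nabla F^k(\tilde{\vx}^{k*}))\| \le L\|\vx^k - \bar{\vx}^k\| + L\|\bar{\vx}^k - \tilde{\vx}^{k*}\|.
\]
Plugging this into $\|\vx^{k+1}-\vx^k\| \le 2\|\vx^k - \bar{\vx}^k\| + \alpha\|\vy^k - \bar{\vy}^k\| + \alpha\|\bar{\vy}^k\|$ gives a clean bound with no stray $D$-type constants.

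Finally, I would assemble the pieces. The $\|\vx^k - \bar{\vx}^k\|$ contributions from the two Lipschitz terms and from $\|\vx^{k+1}-\vx^k\|$ combine into the coefficient $5L$; the $\|\bar{\vx}^k - \tilde{\vx}^{k*}\|$ contributions combine into at most $(2L + \alpha L^2) \le 3L$ thanks to $\alpha L \le \tfrac{1-\beta}{2} \le \tfrac{1}{2}$; the $\Delta_x$ and $\Delta_g$ coefficients come out as $L\sqrt{n}$ and $\sqrt{n}$ directly; and the residual $\alpha L \|\vy^k - \bar{\vy}^k\|$ term merges with the $\beta \|\vy^k - \bar{\vy}^k\|$ term via the step-size condition to yield $\beta + \alpha L \le \tfrac{1+\beta}{2}$, matching \eqref{bound-y}.
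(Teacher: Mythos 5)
Your proof is correct and follows essentially the same route as the paper's: the same consensus-error recursion via $(\vW-\vR)$ and $(I-\vR)$, the same insertion of the optimal points to invoke Assumption \ref{ass-g-movement}, and the same key device of writing $\bar{\vy}^k = \vR\big(\nabla F^k(\vx^k)-\nabla F^k(\tilde{\vx}^{k*})\big)$ to bound $\|\vx^{k+1}-\vx^k\|$ without any $D$-type constant, leading to the identical coefficients $\beta+\alpha L\le\tfrac{1+\beta}{2}$, $4L+\alpha L^2\le 5L$, and $2L+\alpha L^2\le 3L$. (The exponent $2$ on $\|\bar{\vx}^k-\tilde{\vx}^{k*}\|$ in the lemma statement is a typo; both your derivation and the paper's yield the first power.)
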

\begin{proof} See Appendix \ref{app-5}.
\end{proof}

\begin{lemma}\label{lm-bound-x-xbar} Under Assumption \ref{ass-weight}, it holds that
\begin{align}\label{bound-x-xbar}
\|{\vx}^{k+1} - \bar{\vx}^{k+1}\| \le \beta \|\vx^k - \bar{\vx}^k\| + \alpha \beta \|\vy^k - \bar{\vy}^k\|.
\end{align}
\end{lemma}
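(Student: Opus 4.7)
The plan is to exploit the contraction of $\vW - \vR$ on the disagreement subspace, exactly as is standard for consensus-based analyses. First I would substitute the primal update $\vx^{k+1} = \vW(\vx^k - \alpha \vy^k)$ into $\bar{\vx}^{k+1}$. Since $W$ is doubly stochastic (Assumption \ref{ass-weight}), we have $R W = R$ and hence $\vR \vW = \vR$. Left-multiplying the update by $\vR$ therefore yields
\begin{align*}
\bar{\vx}^{k+1} = \vR \vW(\vx^k - \alpha \vy^k) = \vR \vx^k - \alpha \vR \vy^k = \bar{\vx}^k - \alpha \bar{\vy}^k.
\end{align*}

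Next I would subtract this from the full update. Because $\vW \vx^{k+1}$ is linear, the difference becomes
\begin{align*}
\vx^{k+1} - \bar{\vx}^{k+1} = (\vW - \vR)(\vx^k - \alpha \vy^k).
\end{align*}
At this point I would insert $\bar{\vx}^k$ and $\bar{\vy}^k$ for free: both $\bar{\vx}^k$ and $\bar{\vy}^k$ lie in the consensus subspace $\mathrm{span}\{\mathds{1}_n\} \otimes \R^d$, and on this subspace $\vW$ and $\vR$ both act as the identity (by $W\mathds{1}_n = \mathds{1}_n$ and $R\mathds{1}_n = \mathds{1}_n$). Hence $(\vW - \vR)\bar{\vx}^k = 0$ and $(\vW - \vR)\bar{\vy}^k = 0$, so
\begin{align*}
\vx^{k+1} - \bar{\vx}^{k+1} = (\vW - \vR)(\vx^k - \bar{\vx}^k) - \alpha (\vW - \vR)(\vy^k - \bar{\vy}^k).
\end{align*}

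Finally, applying the triangle inequality and the bound $\|\vW - \vR\|_2 = |\lambda_2(W)| = \beta$ (guaranteed by Assumption \ref{ass-weight}, and already used in Lemma \ref{lm-DGD-inequality-2}) gives the desired estimate
\begin{align*}
\|\vx^{k+1} - \bar{\vx}^{k+1}\| \le \beta \|\vx^k - \bar{\vx}^k\| + \alpha \beta \|\vy^k - \bar{\vy}^k\|.
\end{align*}
No smoothness, strong convexity, or step-size restriction is needed—the statement is purely a consequence of the mixing matrix and the linearity of the $\vx$-update, which is why the lemma is stated under Assumption \ref{ass-weight} alone. The only mildly delicate step is justifying $\|\vW - \vR\|_2 \le \beta$ when $W$ is not symmetric, but this is standard in the gradient-tracking literature (and was implicitly invoked earlier in the paper), so no genuine obstacle arises.
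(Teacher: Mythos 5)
Your proof is correct and follows essentially the same route as the paper's: average the update via $\vR\vW = \vR$, subtract to get $\vx^{k+1}-\bar{\vx}^{k+1} = (\vW-\vR)\big(\vx^k-\bar{\vx}^k-\alpha(\vy^k-\bar{\vy}^k)\big)$, and apply the triangle inequality with the contraction factor $\beta$ of $\vW-\vR$ on the disagreement subspace. The norm-versus-spectral-radius caveat you flag for non-symmetric $W$ is real, but the paper glosses over the same point, so your argument is no weaker than the original.
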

\begin{proof} See Appendix \ref{app-6}.
\end{proof}

\begin{lemma}\label{lm-xbar-xstar}
Under Assumptions \ref{ass-weight}--\ref{ass-x-movement}, if step-size $\alpha \le \frac{2}{\mu + L}$ then it follows that
\begin{align}\label{lm6-eq}
\|\bar{\vx}^{k+1} - \tilde{\vx}^{(k+1)*}\| \le&\ \left( 1 - \frac{\alpha \mu}{2} \right)\|\bar{\vx}^k - \tilde{\vx}^{k*}\| \nonumber \\
&\ + \alpha L\|\bar{\vx}^k - {\vx}^{k}\| + \sqrt{n}\Delta_x.
\end{align}
\end{lemma}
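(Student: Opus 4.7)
The plan is to mirror the structure of Lemma~\ref{lm-DGD-inequality-1}, adapting it to DGT's averaging recursion. The key preliminary step is to derive the update for the mean iterate: left-multiplying \eqref{diging-1} by $\vR$ and using $\vR\vW = \vR$ together with the identity $\bar{\vy}^k = \vR\N F^k(\vx^k)$ established in \eqref{tttaaa}, one obtains
\[\bar{\vx}^{k+1} = \bar{\vx}^k - \alpha \bar{\vy}^k = \bar{\vx}^k - \alpha \vR \N F^k(\vx^k).\]
In contrast to the diffusion recursion, the gradient here is evaluated at $F^k$ rather than $F^{k+1}$; this is what ultimately causes the natural reference point in the contraction term to be $\tilde{\vx}^{k*}$ and the drift factor $\sqrt{n}\Delta_x$ to appear without a $(1-\alpha\mu/2)$ prefactor, unlike \eqref{s234hds8-2}.

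Next I would decompose the error by inserting $\tilde{\vx}^{k*}$ as a reference, writing $\bar{\vx}^{k+1} - \tilde{\vx}^{(k+1)*} = \bigl(\bar{\vx}^{k+1} - \tilde{\vx}^{k*}\bigr) + \bigl(\tilde{\vx}^{k*} - \tilde{\vx}^{(k+1)*}\bigr)$ and applying the triangle inequality. The drift piece satisfies $\|\tilde{\vx}^{k*} - \tilde{\vx}^{(k+1)*}\| = \sqrt{n}\,\|\tilde{x}^{k*} - \tilde{x}^{(k+1)*}\| \le \sqrt{n}\,\Delta_x$ by Assumption~\ref{ass-x-movement}. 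For the remaining piece I would add and subtract $\alpha \vR \N F^k(\bar{\vx}^k)$, splitting it into a ``centralized gradient step'' term $\bar{\vx}^k - \alpha\vR \N F^k(\bar{\vx}^k) - \tilde{\vx}^{k*}$ plus a consensus-induced residual $-\alpha \vR\bigl(\N F^k(\vx^k) - \N F^k(\bar{\vx}^k)\bigr)$. The residual is bounded by $\alpha L \|\vx^k - \bar{\vx}^k\|$ via Assumption~\ref{ass-smooth} and the fact that $\vR$ is an orthogonal projection with spectral norm at most $1$.

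For the centralized-gradient term, the observation is that each block of $\bar{\vx}^k - \alpha\vR \N F^k(\bar{\vx}^k)$ coincides with $\bar{x}^k - \alpha \N \bar{f}^k(\bar{x}^k)$, where $\bar{f}^k(\tilde{x}) := \frac{1}{n}\sum_i f_i^k(\tilde{x})$ is $\mu$-strongly convex, $L$-smooth, and minimized at $\tilde{x}^{k*}$. The standard one-step contraction of gradient descent (co-coercivity together with $\alpha \le 2/(\mu+L)$) yields $\|\bar{x}^k - \alpha\N\bar{f}^k(\bar{x}^k) - \tilde{x}^{k*}\|^2 \le \bigl(1 - \tfrac{2\alpha\mu L}{\mu+L}\bigr)\|\bar{x}^k - \tilde{x}^{k*}\|^2$, which I would relax using $\sqrt{1-x} \le 1 - x/2$ and $L \ge \mu$ to obtain the $(1 - \alpha\mu/2)$ factor stated in the lemma. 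Summing the three bounds yields the claimed inequality.

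The main obstacle is largely bookkeeping: correctly translating between stacked vectors and their averaged counterparts, tracking $\sqrt{n}$ factors, and justifying the relaxation of the contraction coefficient. There is no new technical difficulty beyond what already appears in the proof of Lemma~\ref{lm-DGD-inequality-1}; the only substantive difference is that the presence of $F^k$ (rather than $F^{k+1}$) in DGT's mean update shifts the natural reference from $\tilde{\vx}^{(k+1)*}$ to $\tilde{\vx}^{k*}$, which is precisely why the drift term in the resulting bound loses the $(1-\alpha\mu/2)$ prefactor that appeared in the diffusion counterpart.
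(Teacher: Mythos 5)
Your proposal is correct and follows essentially the same route as the paper's proof in Appendix~\ref{app-7}: both start from $\bar{\vx}^{k+1}=\bar{\vx}^k-\alpha\vR\nabla F^k(\vx^k)$ via \eqref{tttaaa}, insert $\tilde{\vx}^{k*}$ and $\alpha\vR\nabla F^k(\bar{\vx}^k)$ to split the error into a contracting centralized gradient step (bounded exactly as in \eqref{xbwe78}), a Lipschitz residual $\alpha L\|\vx^k-\bar{\vx}^k\|$, and the drift $\sqrt{n}\Delta_x$. Your side observation about why the drift term here lacks the $(1-\alpha\mu/2)$ prefactor is accurate but not needed for the argument.
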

\begin{proof} See Appendix \ref{app-7}.
\end{proof}

With Lemmas \ref{lm-diging-y-bary}, \ref{lm-bound-x-xbar} and \ref{lm-xbar-xstar}, when $\alpha \le \min\{\frac{1-\beta}{2L}, \frac{2}{\mu + L}\}$ we reach an inequality as
\begin{align}\label{23nsd8}
& \underbrace{\ba{c}
\|\vy^{k+1} - \bar{\vy}^{k+1}\| \\
\|{\vx}^{k+1} - \bar{\vx}^{k+1}\| \\
\|\bar{\vx}^{k+1} - \tilde{\vx}^{(k+1)*}\|
\ea}_{:={\boldsymbol z}^{k+1}} \nonumber \\
\le &\
\underbrace{\ba{ccc}
\frac{1+\beta}{2} & 5 L & 3 L \\
\alpha \beta & \beta & 0 \\
0 & \alpha L & 1 - \frac{\alpha \mu}{2}
\ea}_{:=A}
\underbrace{\ba{c}
\|\vy^{k} - \bar{\vy}^{k}\| \\
\|{\vx}^{k} - \bar{\vx}^{k}\| \\
\|\bar{\vx}^{k} - \tilde{\vx}^{k*}\|
\ea}_{:={\boldsymbol z}^{k}} \nonumber \\
&\ +
\underbrace{\ba{c}
L\sqrt{n}\Delta_x + \sqrt{n} \Delta_g \\
0 \\
\sqrt{n}\Delta_x
\ea}_{:= {\boldsymbol b}}.
\end{align}
The following lemma shows that when step-size $\alpha$ is sufficiently small, the matrix $A$ is stable, i.e., $\rho(A) < 1$.
\begin{lemma}\label{lm-7}
	When step-size $\alpha \le \frac{(1-\beta)^2\mu}{c L^2} = O(\frac{(1-\beta)^2\mu}{L^2})$ where $c$ is a constant independent of $\beta$, $\mu$ and $L$, it holds that $\rho(A) \le 1 - \frac{\alpha \mu}{4} = 1 - O(\mu \alpha) < 1$. Moreover, the matrix $I-A$ is invertible and it follows that
	\begin{align}
	&\ (I-A)^{-1} \nonumber \\
	\le&\ \frac{8}{(1-\beta)^2\alpha \mu} \ba{ccc}
	\frac{\alpha \mu (1-\beta)}{2} & 6\alpha L^2 & 3 L(1-\beta)\\
	\frac{\alpha^2 \beta \mu}{2} & \frac{\alpha \mu (1-\beta)}{4} & 3\alpha \beta L \\
	\alpha^2 \beta L & \frac{\alpha L(1-\beta)}{2} & \frac{(1-\beta)^2}{2}
	\ea.
	\end{align}
\end{lemma}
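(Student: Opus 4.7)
The plan is to handle the spectral-radius bound and the componentwise inverse bound separately, with two standard tools adapted to this $3\times 3$ situation. For $\rho(A)$, since every entry of $A$ is nonnegative, I will use the weighted-infinity-norm (Collatz--Wielandt style) criterion: if a strictly positive vector $v = (v_1,v_2,v_3)^{\top}$ satisfies $Av \le \theta v$ componentwise, then $\rho(A) \le \theta$, because $D = \mathrm{diag}(v)$ makes every row sum of $D^{-1}AD$ at most $\theta$. With the target $\theta = 1-\alpha\mu/4$, the three componentwise inequalities read
\begin{align*}
\tfrac{1+\beta}{2} + 5L \tfrac{v_2}{v_1} + 3L \tfrac{v_3}{v_1} &\le 1-\tfrac{\alpha\mu}{4}, \\
\alpha\beta \tfrac{v_1}{v_2} + \beta &\le 1 - \tfrac{\alpha\mu}{4}, \\
\alpha L \tfrac{v_2}{v_3} + 1 - \tfrac{\alpha\mu}{2} &\le 1 - \tfrac{\alpha\mu}{4}.
\end{align*}
The third collapses to $v_2/v_3 \le \mu/(4L)$, which I saturate by taking $v_2 = (\mu/(4L))\,v_3$. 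The second becomes an upper bound $v_1/v_2 \le (1-\beta-\alpha\mu/4)/(\alpha\beta)$, while the first, after substituting the value of $v_2$, yields a lower bound of the form $v_1 \gtrsim L v_3 /(1-\beta)$. Compatibility of the lower and upper bounds on $v_1$ is exactly what dictates $\alpha = O((1-\beta)^2\mu/L^2)$; a concrete choice such as $v_3 = 1$, $v_2 = \mu/(4L)$, and $v_1$ a constant multiple of $L/(1-\beta)$ then satisfies all three.

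For the componentwise bound on $(I-A)^{-1}$ I would use the adjugate formula $(I-A)^{-1} = \det(I-A)^{-1}\,\mathrm{adj}(I-A)$, which is tractable here because $I-A$ is a sparse $3\times 3$ matrix whose cofactors are short $2\times 2$ determinants in $\alpha,\beta,\mu,L$. Expanding along the first column gives
\begin{align*}
\det(I-A) = \tfrac{(1-\beta)^2\alpha\mu}{4} - \tfrac{5\alpha^2\beta\mu L}{2} - 3\alpha^2\beta L^2,
\end{align*}
so under the same condition $\alpha \le (1-\beta)^2\mu/(cL^2)$ with $c$ enlarged if needed, the two subleading terms are dominated by half the leading one, yielding $\det(I-A) \ge (1-\beta)^2\alpha\mu/8$ and accounting for the prefactor $8/[(1-\beta)^2\alpha\mu]$ in the statement (in particular, $I-A$ is invertible). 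Computing the nine cofactors directly, most already match the claimed entries on the nose. The only loose ones are $C_{21} = 5\alpha L\mu/2 + 3\alpha L^2$, which is absorbed by $6\alpha L^2$ using $\mu\le L$ to get $C_{21} \le 11\alpha L^2/2$, and $C_{33} = (1-\beta)^2/2 - 5\alpha\beta L$, for which one simply drops the nonpositive correction. Transposing the cofactor matrix then places the entries in the layout written in the lemma.

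The main obstacle is pure bookkeeping: the constant $c$ in the stepsize bound must be chosen large enough that a single $\alpha$ simultaneously (i) satisfies all three weighted row-sum inequalities, (ii) forces the $O(\alpha^2)$ corrections in $\det(I-A)$ below half of the leading $O(\alpha)$ term, and (iii) matches the specific constants in the bound on $(I-A)^{-1}$. Each step is routine in isolation, but the numerical constants must be lined up consistently; if any intermediate estimate is slightly too large, I either rescale $v$ or enlarge $c$, neither of which affects the qualitative form $\alpha = O((1-\beta)^2\mu/L^2)$ used downstream in Section \ref{sec:extra}.
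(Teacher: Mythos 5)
Your proposal is correct, and the two halves relate differently to the paper's proof. For the bound $\rho(A)\le 1-\alpha\mu/4$ you take a genuinely different route: the paper writes out the cubic characteristic polynomial $p(\tau)$, shows $p(\tau^*)\ge 0$ and $p(\tau)>0$ for $\tau>\tau^*$ with $\tau^*=1-\alpha\mu/4$, and then invokes the Perron--Frobenius fact that $\rho(A)$ is a real nonnegative eigenvalue of the nonnegative matrix $A$ to conclude there is no eigenvalue beyond $\tau^*$. Your Collatz--Wielandt argument ($Av\le \theta v$ for a positive $v$ implies $\rho(A)\le\theta$) uses the same nonnegativity of $A$ but avoids the cubic entirely; your choice $v_3=1$, $v_2=\mu/(4L)$, $v_1=\Theta(L/(1-\beta))$ checks out (row three forces $v_2/v_3\le\mu/(4L)$, row one forces $v_1\gtrsim L v_3/(1-\beta)$, and row two then forces $\alpha\lesssim(1-\beta)^2\mu/L^2$), and it makes transparent which entries of $A$ dictate the step-size restriction; it would also scale more gracefully to larger block systems. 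The resulting constant $c$ differs from the paper's $c=768$, but the lemma only asks for some universal $c$. For the inverse bound your approach coincides with the paper's: the same adjugate expansion, the same $\det(I-A)=\frac{(1-\beta)^2\alpha\mu}{4}-\frac{5\alpha^2\beta\mu L}{2}-3\alpha^2\beta L^2\ge\frac{(1-\beta)^2\alpha\mu}{8}$ for small enough $\alpha$, and the same entrywise relaxations ($\frac{5\alpha\mu L}{2}+3\alpha L^2\le 6\alpha L^2$ via $\mu\le L$, and dropping $-5\alpha\beta L$ from the $(3,3)$ cofactor). All nine cofactors you describe match the paper's matrix, so the remaining work is, as you say, only aligning the constants into a single admissible $c$.
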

\begin{proof} See Appendix \ref{app-8}.
\end{proof}

Finally, we bound the steady-state tracking error of dynamic DGT in the following theorem.
\begin{theorem}\label{th-2}
	Under Assumptions \ref{ass-weight}--\ref{ass-x-movement} and \ref{ass-g-movement}, if step-size $\alpha \le \frac{(1-\beta)^2\mu}{c L^2} = O(\frac{(1-\beta)^2\mu}{L^2})$ where $c$ is a constant independent of $\beta$, $\mu$ and $L$, it holds that the variable $\vx^k$ generated by the dynamic DGT recursion \eqref{diging-1}--\eqref{diging-2} will converge exponentially fast, at rate $\rho(A) = 1 - O(\mu \alpha)$, to a neighborhood around $\tilde{\vx}^{k*}$. Moreover, the steady-state tracking error is bounded as
	\begin{align}\label{eq-thm-diging}
	&\ \limsup_{k\to \infty} \frac{1}{\sqrt{n}}\|{\vx}^{k} - \tilde{\vx}^{k*}\| \nonumber \\
	\le&\ \left( \frac{4}{\alpha \mu} + \frac{40\beta L}{(1-\beta)^2 \mu} \right) \Delta_x  + \frac{16\alpha \beta L}{(1-\beta)^2\mu} \Delta_g.
	\end{align}
\end{theorem}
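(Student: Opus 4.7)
The plan is to iterate the vector inequality \eqref{23nsd8} componentwise and take $k \to \infty$. Since every entry of $A$ and $b$ is nonnegative and the entries of $z^k$ are norms (hence nonnegative), induction on $k$ gives the componentwise bound $z^k \le A^k z^0 + \sum_{t=0}^{k-1} A^t b$. Under the step-size condition $\alpha \le \frac{(1-\beta)^2\mu}{cL^2}$, Lemma \ref{lm-7} gives $\rho(A) \le 1 - \alpha\mu/4$, so $A^k z^0$ decays geometrically at rate $1 - O(\mu\alpha)$, yielding both the claimed exponential convergence rate and the existence of the limit $\sum_{t=0}^\infty A^t = (I - A)^{-1}$. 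This produces the steady-state bound $\limsup_{k\to\infty} z^k \le (I-A)^{-1} b$ componentwise.

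Next, I would use the triangle inequality
\begin{align*}
\tfrac{1}{\sqrt{n}}\|\vx^k - \tilde{\vx}^{k*}\| \le \tfrac{1}{\sqrt{n}}\|\vx^k - \bar{\vx}^k\| + \tfrac{1}{\sqrt{n}}\|\bar{\vx}^k - \tilde{\vx}^{k*}\|,
\end{align*}
which is exactly $\frac{1}{\sqrt{n}}(z^k_2 + z^k_3)$. So I would extract the second and third rows of the bound on $(I-A)^{-1}$ supplied by Lemma \ref{lm-7}, form the inner product with $b = \bigl(L\sqrt{n}\Delta_x + \sqrt{n}\Delta_g, \ 0,\ \sqrt{n}\Delta_x\bigr)^\top$, add the two results, and divide by $\sqrt{n}$. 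A direct computation then gives an expression of the form $C_1 \Delta_x + C_2 \Delta_g$, where $C_1$ aggregates contributions from the $(2,1), (2,3), (3,1), (3,3)$ entries of $(I-A)^{-1}$ and $C_2$ aggregates contributions from $(2,1)$ and $(3,1)$.

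The final, and main bookkeeping, step is to simplify the raw $C_1, C_2$ into the clean form $\bigl(\tfrac{4}{\alpha\mu} + \tfrac{40\beta L}{(1-\beta)^2 \mu}\bigr)\Delta_x + \tfrac{16\alpha\beta L}{(1-\beta)^2\mu}\Delta_g$ stated in \eqref{eq-thm-diging}. Here I would repeatedly invoke the step-size condition $\alpha \le \frac{(1-\beta)^2\mu}{cL^2}$, which implies that cross terms like $\frac{\alpha \beta L}{(1-\beta)^2}$ or $\frac{\alpha\beta L^2}{(1-\beta)^2\mu}$ are dominated by the anchor terms $\frac{\beta L}{(1-\beta)^2\mu}$ and $\frac{1}{\alpha\mu}$, so that absorbing them merely tunes the numerical constants. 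The dominant $\Delta_x$ contribution comes from the $(3,3)$ block of $(I-A)^{-1}$ (producing the $\frac{4}{\alpha\mu}$ term) together with the $(2,3)$ and $(3,1)$ blocks (producing the $\frac{\beta L}{(1-\beta)^2\mu}$ term), while the $\Delta_g$ contribution comes from the $(2,1)$ and $(3,1)$ blocks hitting the first coordinate of $b$.

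The main obstacle is purely the arithmetic in the last step: the matrix product $(I-A)^{-1} b$ yields many terms of different orders in $\alpha$, $\beta$, and $L/\mu$, and one must be careful to verify via the step-size condition that all the non-dominant terms can be absorbed with the constants $4, 40,$ and $16$ chosen in the statement. Once that reduction is carried out, the theorem follows immediately from the limiting inequality. I would relegate both the iteration argument (which is standard) and the constant-tracking calculation to the appendix.
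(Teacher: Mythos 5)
Your proposal follows essentially the same route as the paper's proof in Appendix I: iterate the nonnegative linear recursion \eqref{23nsd8} to get $\limsup_k {\boldsymbol z}^k \le (I-A)^{-1}{\boldsymbol b}$ with geometric decay at rate $\rho(A)=1-O(\mu\alpha)$ from Lemma \ref{lm-7}, extract rows 2 and 3 of the explicit bound on $(I-A)^{-1}$, pair them with ${\boldsymbol b}$, and absorb the subdominant cross terms via the step-size condition (the paper uses $\mu\le L$, $\alpha^2\mu/2\le\alpha$, $\alpha L\le 1$) before applying the triangle inequality $\|\vx^k-\tilde{\vx}^{k*}\|\le\|\vx^k-\bar{\vx}^k\|+\|\bar{\vx}^k-\tilde{\vx}^{k*}\|$. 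Your identification of which entries of $(I-A)^{-1}$ produce the $\frac{4}{\alpha\mu}$, $\frac{\beta L}{(1-\beta)^2\mu}$, and $\Delta_g$ contributions is accurate, so the proposal is correct.
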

\begin{proof} See Appendix \ref{app-9}.
\end{proof}

\begin{remark} From inequality \eqref{eq-thm-diging}, we further have
	\begin{align}\label{gt-steady-state-error}
 &\hspace{-1cm} \limsup_{k\to \infty} \left(\frac{1}{n}\sum_{i=1}^n\|x_i^k - \tilde{x}^{k*}\|^2\right)^{\frac{1}{2}} \nonumber \\
=&\ O\left( \frac{\Delta_x}{\alpha} + \frac{\beta \Delta_x}{(1-\beta)^2} \right) + O\left(\frac{\alpha \beta \Delta_g}{(1-\beta)^2}\right),
	\end{align}
	where we ignore the influences of constants $L$ and $\mu$.
	This result implies that when step-size $\alpha$ is sufficiently small, dynamic DGT cannot effectively track the dynamic optimal solution $\tilde{\vx}^{k*}$ because of the term $O(\frac{\Delta_x}{\alpha})$, which is consistent with our intuition and similar to dynamic diffusion. On the other hand, when $\alpha$ is too large, the inherent bias term $O(\frac{\alpha \beta \Delta_g}{(1-\beta)^2})$ will dominate and deteriorate the steady-state tracking performance.
\end{remark}

\begin{remark}
	If $f^k_i(\tilde{x}) \equiv f_i(\tilde{x})$ remains unchanged with time, we have $\Delta_x = 0$ and $\Delta_g=0$. In this scenario, inequality \eqref{eq-thm-dgd-error} implies
	\begin{align}
	\limsup_{k\to \infty} \left(\frac{1}{n}\sum_{i=1}^n\|x_i^k - \tilde{x}^{k*}\|^2\right)^{\frac{1}{2}} = 0,
	\end{align}
	which is consistent with the convergence property for static DGT, as derived in \cite{xu2015augmented,di2016next,nedic2017achieving, qu2018harnessing}.
\end{remark}

\begin{remark}\label{remark-gt-comparison}
	We compare the result in \eqref{dgd-steady-state-error} with the existing results for decentralized dynamic primal-dual methods in literature. It is observed from \eqref{dgd-steady-state-error} that the drifts of both optimal solutions and optimal gradients, i.e., $\Delta_x$ and $\Delta_g$, will affect the steady-state tracking performance of dynamic DGT. This is consistent with the results for dynamic ADMM \cite{ling2013decentralized} and primal-descent dual-ascent methods \cite{mokhtari2016decentralized,simonetto2014double}. There are two major differences between the bound in \eqref{gt-steady-state-error} and those derived in \cite{ling2013decentralized,mokhtari2016decentralized,simonetto2014double}. First, the bound in \eqref{gt-steady-state-error} distinguishes the tracking error caused by the drift of optimal solutions and that caused by the drift of optimal gradients, while \cite{ling2013decentralized,mokhtari2016decentralized,simonetto2014double} mixed all error terms into one. For this reason, it is difficult to conclude from the results in \cite{ling2013decentralized,mokhtari2016decentralized,simonetto2014double} that the tracking error caused by the drift of optimal solutions will dominate when the step-size is sufficiently small. Second, all the bounds derived in \cite{ling2013decentralized,mokhtari2016decentralized,simonetto2014double} ignore the influence of network topology which, as we have shown in \eqref{dgd-steady-state-error} and will validate in the numerical experiments, is a key component to clarify why primal methods can outperform primal-dual methods in certain scenarios.
\end{remark}

\begin{remark}
If the network is fully connected and $W = \frac{1}{n}\mathds{1}\mathds{1}^T$, it holds that $\beta=0$ and hence the bound given in \eqref{gt-steady-state-error} reduces to
	$
	\limsup_{k\to \infty} \left(\frac{1}{n}\sum_{i=1}^n\|x_i^k - \tilde{x}^{k*}\|^2\right)^{\frac{1}{2}}=O\left( \frac{\Delta_x}{\alpha} \right),
	$
	which, similar to dynamic diffusion, matches with the performance of centralized gradient descent for dynamic optimization.
\end{remark}

\section{Comparison Between Dynamic Diffusion and Dynamic DGT}
\label{sec: comparison}
In this section we compare the steady-state tracking performance between dynamic diffusion and dynamic DGT, and discuss scenarios in which one method can outperform the other. By substituting the established step-sizes $\alpha_{\rm diffusion} = O(1-\beta)$ and $\alpha_{\rm DGT} = O((1-\beta)^2)$ into the bounds of steady-state tracking errors in \eqref{dgd-steady-state-error} and \eqref{gt-steady-state-error}, respectively, we reach
\begin{align}
\mbox{diffusion:}\quad \limsup_{k\to \infty}& \left(\frac{1}{n}\sum_{i=1}^n\|x_i^k - \tilde{x}^{k*}\|^2\right)^{\frac{1}{2}} \nonumber \\
=&\ O\left(\frac{\Delta_x}{1-\beta} \right) + O\left(\beta D\right), \label{primal-error} \\
\mbox{DGT:}\quad \limsup_{k\to \infty}& \left(\frac{1}{n}\sum_{i=1}^n\|x_i^k - \tilde{x}^{k*}\|^2\right)^{\frac{1}{2}} \nonumber \\
=&\ O\left(\frac{\Delta_x}{(1-\beta)^2} \right) + O\left(\beta \Delta_g\right). \label{primal-dual-error}
\end{align}
Comparing \eqref{primal-error} and \eqref{primal-dual-error}, we observe that while DGT removes the effect of the averaged magnitude of optimal gradients (i.e., the quantity $D$), it introduces a new error incurred by the drift of the optimal gradients (i.e., the quantity $\Delta_g$). Furthermore, the error term $\Delta_x$ is magnified by $O(\frac{1}{(1-\beta)^2})$ for DGT, which is worse than the primal method, diffusion. These two facts are very different from the previous results for decentralized static optimization, in which the primal-dual approaches completely remove $D$ without bringing in any new error and suffer less from badly-connected network topologies.

Bounds \eqref{primal-error} and \eqref{primal-dual-error} imply that either primal or primal-dual approaches can be superior to the other for different values of $\Delta_x$, $\Delta_g$, $D$ and $\beta$. Appropriate algorithms should be carefully chosen for different scenarios. We summarize the guidelines for choosing algorithms as follows.
\begin{itemize}
	\item When the drifting rate of optimal solution dominates, i.e., $\Delta_x \gg D$ and $\Delta_x \gg \Delta_g$, diffusion has smaller steady-state tracking error than DGT. Moreover, the worse the topology is (i.e., the closer $\beta$ is to $1$), the more evident the advantage of diffusion over DGT is. In this scenario, one should choose diffusion rather than DGT.
	
	\item When $\Delta_x \ll D$, $\Delta_x \ll \Delta_g$, $D < \Delta_g$, and the network is well-connected such that $\beta$ is not close to $1$, diffusion has smaller steady-state tracking error than DGT. In this scenario, one should choose diffusion rather than DGT.
	
	\item When $\Delta_x \ll D$, $\Delta_x \ll \Delta_g$, $\Delta_g < D$, and the network is well-connected such that $\beta$ is not close to $1$, DGT has smaller steady-state tracking error than diffusion. In this scenario, one should choose DGT rather than diffusion.
\end{itemize}

Next we construct several examples in which we can control $\Delta_x$, $\Delta_g$, $D$ and $\beta$. With these examples, we will validate the above guidelines via numerical experiments in Section \ref{sec:exp}.
	
\subsection{Scenario I: $D = \Delta_g = 0$ and $\Delta_x > 0$}
We consider a decentralized dynamic least-squares problem in the form of
\begin{align}
\min_{\tilde{x}\in \mathbb{R}^d}\quad \frac{1}{2}\sum_{i=1}^{n} \|C_i^k \tilde{x} - r_i^k \|^2.
\end{align}
The dynamic optimal solution $\tilde{x}^{k*}$ is generated following a trajectory such as circle or sinusoid and drifts slowly, with $\Delta_x > 0$. At time $k$, the coefficient matrix $C_i^k \in \mathbb{R}^{m\times d}$ is observed, where $m < d$ but $mn > d$. Then, the measurement vector is given by $r_i^k = C_i^k \tilde{x}^{k*}$. For this example, we can verify that
$
\nabla f_i^k(\tilde{x}^{k*}) = (C_i^k)^T(C_i^k \tilde{x}^{k*} - r_i^k) = 0, \forall i\in \mathcal{V}, \forall k=1,2,\cdots,
$
and hence it holds that
\begin{align}
D &= \max_k\{\frac{1}{\sqrt{n}}\sum_{i=1}^{n}\|\nabla f_i^k(\tilde{x}^{k*})\|\} = 0, \nonumber \\
\Delta_g &= \max_k\{\frac{1}{\sqrt{n}}\sum_{i=1}^n\|\nabla f_i^{k+1}(\TI{x}^{(k+1)*}) - \nabla f_i^{k}(\TI{x}^{k*})\|\}=0. \nonumber
\end{align}
To verify the effect of network topology, we consider the linear and cyclic graphs in which $1-\beta = O(\frac{1}{n^2})$, and the grid graph in which $1-\beta = O(\frac{1}{n})$ \cite{yuan2019performance}. By varying the network size $n$, we can adjust the value of $1-\beta$.

\subsection{Scenario II: $\Delta_x = 0$ and $\Delta_g > D > 0$}\label{sec:sce-2}
We consider a dynamic average consensus problem in the form of
\begin{align}\label{ex_1}
\min\limits_{\TI{x}\in \R} \quad \frac{1}{2}\sum_{i=1}^n (\tilde{x} - y_i^k)^2,
\end{align}
where $y_i^0 = i\cdot m$ and $m$ is a given positive constant. For simplicity we assume $n = 2p+1$ where $p$ is a positive integer. The optimal solution at time $0$ is given by
\begin{align}\label{ex_1_sol}
\TI{x}^{0*}=\F{1}{n}\sum_{i=1}^n y_i^0 = \left(\frac{n+1}{2}\right) m = (p+1)m.
\end{align}
For each time $k$, we conduct circular shifting for the sequences $\{y_i^k\}_{i=1}^n$, such that
\begin{align}
y_i^{k+1} = y_{\langle i-T \rangle_n}^{k},
\end{align}
where the operator $\langle i-T \rangle_n = i-T ~ \text{mod} ~ n$ if $i-T ~ \text{mod} ~ n \neq 0$ and $\langle i-T \rangle_n = n$ if $i-T ~ \text{mod} ~ n = 0$, while $T$ is a given constant. Apparently, since only the locations of $\{y_i^k\}_{i=1}^n$ vary, we conclude that $\TI{x}^{k*}\equiv (p+1)m$ for any $k$ and hence $\Delta_x = 0$. On the other hand, note that
\begin{align}\label{23nsdnds9}
 \frac{1}{\sqrt{n}}\sum_{i=1}^{n}|\nabla f^k_i(\tilde{x}^{k*})| =&\ \frac{1}{\sqrt{n}}\sum_{i=1}^{n}|\tilde{x}^{k*} - y_i^k| \nonumber \\
=&\  \frac{1}{\sqrt{2p+1}}\sum_{i=1}^{2p+1}|p+1 - i|m \nonumber \\
=&\ \frac{p(p+1)m}{\sqrt{2p+1}},\quad \forall\, k=1,2,\cdots,
\end{align}
which implies that $D=p(p+1)m/\sqrt{2p+1}$. In addition, we can examine
\begin{align}
&\ |\nabla f^k_i(\tilde{x}^{k*}) - \nabla f^{k-1}_i(\tilde{x}^{(k-1)*})| \nonumber \\
=&\ |\nabla f^k_i(\tilde{x}^{k*}) - \nabla f^{k-1}_i(\tilde{x}^{k*})| = |y_i^k - y_i^{k-1}|,
\end{align}
which implies that $\Delta_g = \max_{k}\{\frac{1}{\sqrt{n}}\sum_{i=1}^{n}|y_i^k - y_i^{k-1}|\}$. By setting $T=p+1$, we then have
\begin{align}\label{23dshsd}
\Delta_g = \frac{2p(p+1)m}{\sqrt{n}} = \frac{2p(p+1)m}{\sqrt{2p+1}}.
\end{align}
Comparing \eqref{23nsdnds9} and \eqref{23dshsd}, we have that $D < \Delta_g$ and the gap $\Delta_g - D$ becomes larger as $p$ or $m$ increases.

\subsection{Scenario III: $\Delta_x = 0$ and $D > \Delta_g > 0$}
We still consider the setting in scenario II, but let $T=1$ and hence reach
\begin{align}\label{23dshsd-2}
\Delta_g = \frac{2(n-1)m}{\sqrt{n}} = \frac{4pm}{\sqrt{2p+1}}.
\end{align}
Comparing \eqref{23nsdnds9} and \eqref{23dshsd-2}, we have that $\Delta_g < D$ when $p>3$ and the gap $D - \Delta_g$ becomes larger as $p$ or $m$ increases.

\begin{figure}[t]
	\centering
	\includegraphics[scale=0.45]{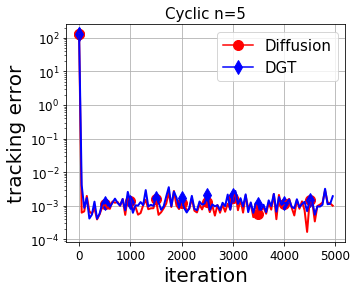}
	\includegraphics[scale=0.45]{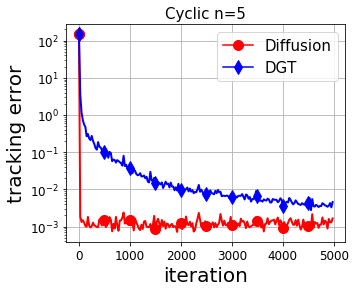}
	\includegraphics[scale=0.45]{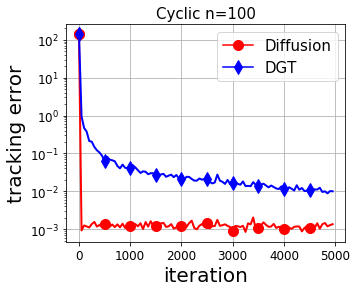}
	\caption{Scenario I: Comparison between dynamic diffusion and DGT over cyclic graphs with different sizes: $n=5$, $n=50$, and $n=100$.}
	\label{fig:different_size}
\end{figure}

\section{Numerical Experiments}
\label{sec:exp}

In this section we compare the performance of the two dynamic methods, diffusion and DGT, in the scenarios discussed in Section \ref{sec: comparison}.

\subsection{Scenario I: $D = \Delta_g = 0$ and $\Delta_x > 0$}
We consider the example discussed in Scenario I. Assume that $\tilde{x}^{k*}\in \mathbb{R}^2$ moves along a circle as
$$
\tilde{x}^{k*}(1) = \cos\left(\frac{3\pi k}{2 M} \right), \quad \tilde{x}^{k*}(2) = \sin\left(\frac{3\pi k}{2 M}\right),
$$
where $\tilde{x}^{k*}(1)$ and $\tilde{x}^{k*}(2)$ denote the first and second coordinates of $\tilde{x}^{k*}$, respectively. We let $M=5000$, $k\in [0, M-1]$, $m=1$, and generate $C_i^k \sim \mathcal{N}(0, 1) \in \mathbb{R}^{1\times 2} $ and $r_i^k = C_i^k \tilde{x}^{k*} \in \mathbb{R}$. We compare dynamic diffusion and DGT over cyclic graphs with sizes $5$, $50$ and $100$ in Fig. \ref{fig:different_size}. The step-sizes for both algorithms are tuned to optimal by hand to reach the best steady-state tracking errors. The $y$-axis indicates the tracking error, defined as $(\frac{1}{n}\sum_{i=1}^n\|x_i^k - \tilde{x}^{k*}\|^2)^{1/2}/\|\tilde{x}^{k*}\|^2$. Note that $\|\tilde{x}^{k*}\|^2$ is set as time-invariant in all the numerical experiments. It is observed that when $n=5$ with $\beta=0.54$, diffusion is slightly better than DGT. For $n=50$ with $\beta=0.994$, diffusion outperforms DGT. For $n=100$ with $\beta=0.999$, diffusion is observed significantly better than DGT. The phenomenon shown in Fig. \ref{fig:different_size} validates our result that DGT is more sensitive to badly-connected topologies.

Next we compare the primal method diffusion with the dynamic versions of other primal-dual algorithms, including EXTRA \cite{shi2015extra}, exact-diffusion (E-diffusion) \cite{yuan2017exact1}, ADMM \cite{ling2013decentralized}, and DLM \cite{ling2015dlm}. We consider a cyclic graph with $n=100$ and $\beta=0.999$, and tune the parameters (e.g., the step-size, the augmented Lagrangian coefficient, etc) to the optimal so that each algorithm reaches its best steady-state tracking error; see Fig. \ref{fig:track_circle}. Note that diffusion is still superior to the others, which implies that our analysis on DGT can be potentially generalized to other primal-dual algorithms and that primal methods can outperform primal-dual methods when $\Delta_x \gg D$ and $\Delta_x \gg \Delta_g$ and $\beta$ is close to $1$.

\begin{figure}[t]
	\centering
	\includegraphics[scale=0.45]{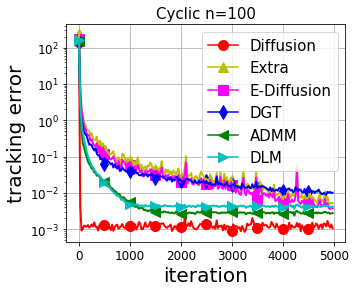}
	\caption{Scenario I: Comparison between different dynamic algorithms over a cyclic graph with size $n=100$.}
	\label{fig:track_circle}
\end{figure}

\subsection{Scenario II: $\Delta_x = 0$ and $\Delta_g > D > 0$}
We consider the example discussed in Scenario II. Set the parameters as $m=1$, $p=1000$ so that $n=2p+1 = 2001$, and $T=p+1=1001$ (the definition of $T$ is given in Section \ref{sec:sce-2}). We exploit a random network with $n=2001$ and $\beta=0.89$. Since $D < \Delta_g$ (see \eqref{23nsdnds9} and \eqref{23dshsd}), it is expected that diffusion will outperform DGT in terms of the steady-state tracking error; see the discussion in Section \ref{sec:sce-2}. In Fig. \ref{fig:average_1}, we compare dynamic diffusion with the dynamic versions of DGT, EXTRA, exact diffusion (E-diffusion), ADMM, and DLM. The parameters of all algorithms are tuned to the optimal to reach the best steady-state tracking errors. It is observed that diffusion is still superior to all the primal-dual methods including DGT. This implies that primal methods can outperform primal-dual methods when $D \gg \Delta_x$, $\Delta_g \gg \Delta_x$, and $\Delta_g > D$.

\subsection{Scenario III: $\Delta_x = 0$ and $D > \Delta_g > 0$}
Now we consider the example discussed in Scenario III and a random network with $n=2001$ and $\beta=0.89$. We set $m=1$, $p=1000$ so that $n=2p+1 = 2001$, and $T=1$. In Fig. \ref{fig:average_2}, we compare dynamic diffusion with the dynamic versions of DGT, EXTRA, exact diffusion (E-diffusion), ADMM, and DLM. The parameters of all algorithm are tuned to the optimal to reach the best steady-state tracking errors. DGT has the lowest steady-state tracking error among all the primal-dual methods and diffusion has the worse performance. This implies that primal-dual methods can outperform primal methods when $D \gg \Delta_x$, $\Delta_g \gg \Delta_x$, and $\Delta_g < D$.

\begin{figure}[H]
	\centering
	\includegraphics[scale=0.45]{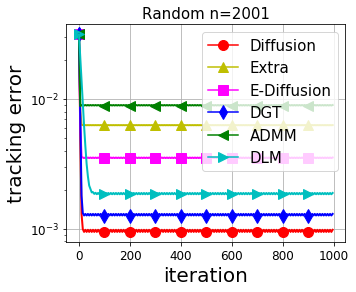}
	\caption{Scenario II: Comparison between different dynamic algorithms over a random network with size $n=2001$ and $\beta=0.89$.}
	\label{fig:average_1}
\end{figure}

\begin{figure}[H]
	\centering
	\includegraphics[scale=0.45]{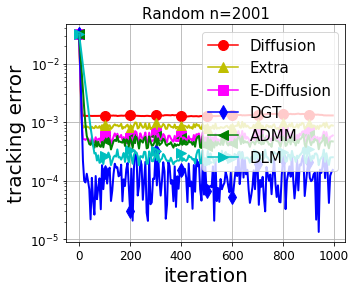}
	\caption{Scenario III: Comparison between different dynamic algorithms over a random network with size $n=2001$ and $\beta=0.89$.}
	\label{fig:average_2}
\end{figure}

\section{Conclusion}
\label{sec:conclusion}
This paper investigates the primal and primal-dual methods in solving the decentralized dynamic optimization problem. By studying two representative methods, i.e., diffusion and decentralized gradient tracking (DGT), we find that primal algorithms can outperform primal-dual algorithms under certain conditions. In particular, we prove that diffusion is greatly affected by the magnitudes of dynamic optimal gradients, while DGT is more sensitive to the drifts of dynamic optimal gradients. Theoretical analysis also shows that diffusion works better over badly-connected network. These conclusions provide guidelines on how to choose proper dynamic algorithms in different scenarios.


\appendices
\section{Proof of Lemma \ref{lm-DGD-inequality-1}}\label{app-1}
From recursion \eqref{eq-DGD-avg} we have
\begin{align}
\bar{x}^{k+1} - \tilde{x}^{(k+1)*} &= \bar{x}^k - \tilde{x}^{(k+1)*} -  \frac{\alpha}{n}\sum_{i=1}^{n}\nabla f_i^{k+1}(\bar{x}^k)\nonumber \\
& -  \frac{\alpha}{n}\sum_{i=1}^{n}[\nabla f_i^{k+1}(x_i^k) - \nabla f_i^{k+1}(\bar{x}^k)].
\end{align}
Using the triangle inequality, we have
\begin{align}\label{s234hds8}
\|\bar{x}^{k+1} - \tilde{x}^{(k+1)*}\| &\le \|\bar{x}^k \hspace{-0.8mm}-\hspace{-0.8mm} \tilde{x}^{(k+1)*} \hspace{-0.8mm}-\hspace{-0.8mm}  \frac{\alpha}{n}\sum_{i=1}^{n}\nabla f_i^{k+1}(\bar{x}^k)\| \nonumber\\
&\hspace{-1cm}+ \alpha \|  \frac{1}{n}\sum_{i=1}^{n}[\nabla f_i^{k+1}(x_i^k) - \nabla f_i^{k+1}(\bar{x}^k)]\|.
\end{align}

Now we check the first term at the right-hand side of \eqref{s234hds8}. Note that
\begin{align}\label{xbwe78}
&\ \|\bar{x}^k \hspace{-0.8mm}-\hspace{-0.8mm} \tilde{x}^{(k+1)*} \hspace{-1mm}-\hspace{-1mm}  \frac{\alpha}{n}\sum_{i=1}^{n}\nabla f_i^{k+1}(\bar{x}^k)\|^2 \nonumber \\
\overset{(a)}{=}& \|\bar{x}^k \hspace{-1mm}-\hspace{-1mm} \tilde{x}^{(k+1)*} \hspace{-1mm}-\hspace{-1mm}  \big(\frac{\alpha}{n}\hspace{-1mm}\sum_{i=1}^{n}\nabla f_i^{k+1}(\bar{x}^k) \hspace{-1mm}-\hspace{-1mm}  \frac{\alpha}{n}\hspace{-1mm}\sum_{i=1}^{n}\nabla f_i^{k+1}(\tilde{x}^{(k+1)*}) \big) \|^2 \nonumber \\
\overset{(b)}{\le}& \Big(1 - \frac{2\alpha \mu L}{\mu + L}\Big)\|\bar{x}^k \hspace{-0.8mm}-\hspace{-0.8mm} \tilde{x}^{(k+1)*}\|^2 \nonumber \\
&-\hspace{-1mm} \Big(\hspace{-0.5mm}\frac{2\alpha}{\mu + L} \hspace{-0.5mm}-\hspace{-0.5mm} \alpha^2\hspace{-0.5mm}\Big)\hspace{-0.3mm}\|\frac{1}{n}\hspace{-1mm}\sum_{i=1}^{n}\hspace{-0.5mm}\nabla f_i^{k+1}(\bar{x}^k) \hspace{-0.5mm}-\hspace{-0.5mm} \frac{1}{n}\hspace{-1mm}\sum_{i=1}^{n}\hspace{-0.6mm}\nabla f_i^{k+1}(\tilde{x}^{(k+1)*})\|^2 \nonumber \\
\overset{(c)}{\le}&\ \Big(1 - \frac{2\alpha \mu L}{\mu + L}\Big)\|\bar{x}^k \hspace{-0.8mm}-\hspace{-0.8mm} \tilde{x}^{(k+1)*}\|^2 \nonumber\\
\overset{(d)}{\le}&\ \Big( 1 - \frac{\alpha \mu L}{\mu + L}\Big)^2 \|\bar{x}^k \hspace{-0.8mm}-\hspace{-0.8mm} \tilde{x}^{(k+1)*}\|^2\nonumber \\
\overset{(e)}{\le}&\ \Big( 1 - \frac{\alpha \mu}{2}\Big)^2 \|\bar{x}^k \hspace{-0.8mm}-\hspace{-0.8mm} \tilde{x}^{(k+1)*}\|^2.
\end{align}
Here equality $(a)$ holds since $\frac{1}{n}\sum_{i=1}^{n}\nabla f_i^{k+1}(\tilde{x}^{(k+1)*}) = 0$. Inequality $(b)$ holds because
\begin{align}\label{23n33n}
&\ \langle \bar{x}^k \hspace{-0.8mm}-\hspace{-0.8mm} \tilde{x}^{(k+1)*}, \frac{1}{n}\sum_{i=1}^{n}\nabla f_i^{k+1}(\bar{x}^k) - \frac{1}{n}\sum_{i=1}^{n}\nabla f_i^{k+1}(\tilde{x}^{(k+1)*})  \rangle \nonumber \\
\ge&\ \frac{1}{\mu + L} \|\frac{1}{n}\sum_{i=1}^{n}\nabla f_i^{k+1}(\bar{x}^k) - \frac{1}{n}\sum_{i=1}^{n}\nabla f_i^{k+1}(\tilde{x}^{(k+1)*})  \|^2 \nonumber \\
&\ + \frac{\mu L}{\mu + L} \|\bar{x}^k \hspace{-0.8mm}-\hspace{-0.8mm} \tilde{x}^{(k+1)*}\|^2,
\end{align}
which is adapted from Theorem 2.1.12 in \cite{nesterov2004introductory} and $\mu$, $L$ are introduced in Assumptions \ref{ass-smooth} and \ref{ass-strong}. Inequality $(c)$ holds since we set $\alpha \le \frac{2}{\mu + L}$ such that $\frac{2\alpha}{\mu + L} - \alpha^2 \geq 0$. Inequality $(d)$ holds because $1-\frac{2\alpha \mu L}{\mu + L} \le 1 - \frac{2\alpha \mu L}{\mu + L} + \Big(\frac{\alpha \mu L}{\mu + L}\Big)^2 = \Big(1 - \frac{\alpha \mu L}{\mu + L}\Big)^2$ and $(e)$ holds because $2L \ge L + \mu$ and hence $1 - \frac{\alpha \mu L}{\mu + L} \le 1 - \frac{\alpha \mu}{2}$ (note that $1 - \frac{\alpha \mu L}{\mu + L} > 0$ when $\alpha \le \frac{2}{\mu + L}$). As a result, when $\alpha \le \frac{2}{\mu + L}$, it holds that
\begin{align}\label{pweudsh}
&\|\bar{x}^k \hspace{-0.8mm}-\hspace{-0.8mm} \tilde{x}^{(k+1)*} \hspace{-0.8mm}-\hspace{-0.8mm}  \frac{\alpha}{n}\sum_{i=1}^{n}\nabla f_i^{k+1}(\bar{x}^k)\| \nonumber\\
&\le \left( 1 - \frac{\alpha \mu}{2} \right) \| \bar{x}^k \hspace{-0.8mm}-\hspace{-0.8mm} \tilde{x}^{(k+1)*} \| \nonumber\\
&\le \left( 1 - \frac{\alpha \mu}{2} \right) \| \bar{x}^k \hspace{-0.8mm}-\hspace{-0.8mm} \tilde{x}^{k*} \| + \left( 1 - \frac{\alpha \mu}{2} \right) \Delta_x,
\end{align}
where the last inequality holds because $\|\bar{x}^k \hspace{-0.8mm}-\hspace{-0.8mm} \tilde{x}^{(k+1)*}\| \le \|\bar{x}^k \hspace{-0.8mm}-\hspace{-0.8mm} \tilde{x}^{k*}\| + \|\tilde{x}^{k*} \hspace{-0.8mm}-\hspace{-0.8mm} \tilde{x}^{(k+1)*}\|$ and $\|\tilde{x}^{k*} \hspace{-0.8mm}-\hspace{-0.8mm} \tilde{x}^{(k+1)*}\|\le \Delta_x$ (see Assumption \ref{ass-x-movement}).

For the second term at the right-hand side of \eqref{s234hds8}, we have
\begin{align}\label{sbwed8}
& \|  \frac{1}{n}\sum_{i=1}^{n}[\nabla f_i^{k+1}(x_i^k) - \nabla f_i^{k+1}(\bar{x}^k)]\| \nonumber\\
& \le \frac{1}{n}\sum_{i=1}^{n}\|\nabla f_i^{k+1}(x_i^k) - \nabla f_i^{k+1}(\bar{x}^k)\| \nonumber \\
& \overset{(a)}{\le} \frac{L}{n}\sum_{i=1}^{n}\|x_i^k - \bar{x}^k\| \overset{(b)}{\le} \frac{L}{\sqrt{n}}\|\vx^k - \bar{\vx}^k\|,
\end{align}
where inequality $(a)$ holds because of Assumption \ref{ass-smooth}, and $(b)$ holds since $\sum_{i=1}^{n}\|x_i^k - \bar{x}^k\| \le \sqrt{n} \|\vx^k - \bar{\vx}^k\|$. By substituting \eqref{pweudsh} and \eqref{sbwed8} into \eqref{s234hds8}, we obtain
\begin{align}
\|\bar{x}^{k+1} - \tilde{x}^{(k+1)*}\| &\le \Big(1 - \frac{\alpha \mu}{2}\Big) \|\bar{x}^k \hspace{-0.8mm}-\hspace{-0.8mm} \tilde{x}^{k*}\| \nonumber\\
& \hspace{-0.8mm}+\hspace{-0.8mm} \frac{\alpha L}{\sqrt{n}} \| \vx^k - \bar{\vx}^k \| +  \left( 1 - \frac{\alpha \mu}{2} \right) \Delta_x.
\end{align}
Recalling that $\|\bar{\vx}^{k+1} - \tilde{\vx}^{(k+1)*}\| = \sqrt{n}\|\bar{x}^{k+1} - \tilde{x}^{(k+1)*}\|$, we obtain the inequality in \eqref{s234hds8-2}.

\section{Proof of Lemma \ref{lm-DGD-inequality-2}}\label{app-2}
By subtracting \eqref{238sdghdsb9} from \eqref{dynamic-DGD-2}, we reach
\begin{align}\label{238sdbm20}
\vx^{k+1} - \bar{\vx}^{k+1} &= \vW \vx^k - \bar{\vx}^k - \alpha \left( \vW - \vR \right) \nabla F^{k+1}(\vx^k) \nonumber \\
&\overset{(a)}{=} (\vW \hspace{-0.6mm}-\hspace{-0.6mm} \vR) (\vx^k \hspace{-0.6mm}-\hspace{-0.6mm} \bar{\vx}^k) \hspace{-0.6mm}-\hspace{-0.6mm} \alpha \left( \vW \hspace{-0.6mm}-\hspace{-0.6mm} \vR \right) \nabla F^{k+1}(\vx^k) \nonumber \\
&= \left(\vW \hspace{-0.5mm}-\hspace{-0.5mm} \vR\right) \big(\vx^k \hspace{-0.5mm}-\hspace{-0.5mm} \bar{\vx}^k - \alpha [\nabla F^{k+1}(\vx^k)  \nonumber\\
&\hspace{1cm} - \nabla F^{k+1}(\bar{\vx}^k)] - \alpha\nabla F^{k+1}(\bar{\vx}^{k}) \big),
\end{align}
where $(a)$ holds because $\bar{\vx}^k = \vR \vx^k$ and $(\vW - \vR)\bar{\vx}^k=0$. By taking $\ell_2$-norm on both sides of the above equality, we reach
\begin{align}
&\ \|\vx^{k+1} - \bar{\vx}^{k+1}\|\nonumber\\
&\le \hspace{-1mm} \rho(\hspace{-0.3mm}\vW \hspace{-1mm}-\hspace{-1mm} \vR) \hspace{-0.3mm} \|\vx^k \hspace{-1mm}-\hspace{-1mm} \bar{\vx}^k \hspace{-1mm}-\hspace{-1mm} \alpha\nabla F^{k+1}(\hspace{-0.3mm}\bar{\vx}^{k}\hspace{-0.3mm})\nonumber \\
& \hspace{2cm} - \alpha [\nabla F^{k+1}(\hspace{-0.3mm}\vx^k\hspace{-0.3mm}) \hspace{-1mm}-\hspace{-1mm} \nabla F^{k+1}(\hspace{-0.3mm}\bar{\vx}^{k}\hspace{-0.3mm})] \hspace{-1mm}\| \nonumber \\
&\overset{(a)}{\le} \beta \|\vx^k \hspace{-1mm}-\hspace{-1mm} \bar{\vx}^k\hspace{-1mm}-\hspace{-1mm}\alpha [\nabla F^{k+1}(\vx^k) \hspace{-1mm}-\hspace{-1mm} \nabla F^{k+1}(\bar{\vx}^{k})]\hspace{-1mm}-\hspace{-1mm} \alpha\nabla F^{k+1}(\bar{\vx}^{k})\| \nonumber \\
&\overset{(b)}{\le} \beta \|\vx^k \hspace{-0.5mm}-\hspace{-0.5mm} \bar{\vx}^k - \alpha [\nabla F^{k+1}(\vx^k) - \nabla F^{k+1}(\bar{\vx}^{k})]\| \nonumber \\
&\quad + \alpha \beta L \|\bar{\vx}^{k} - \tilde{\vx}^{k*}\| \nonumber \\
&\quad + \alpha \beta L \|\tilde{\vx}^{k*} - \tilde{\vx}^{(k+1)*}\| \nonumber \\
&\quad + \alpha \beta \|\nabla F^{k+1}(\tilde{\vx}^{(k+1)*})\| \nonumber \\
&\overset{(c)}{\le} \beta \|\vx^k \hspace{-0.5mm}-\hspace{-0.5mm} \bar{\vx}^k\| \hspace{-1mm}+\hspace{-1mm} \alpha \beta L \|\bar{\vx}^k \hspace{-1mm}-\hspace{-1mm} \tilde{\vx}^{k*}\| \hspace{-1mm}+\hspace{-1mm} \alpha \beta L \sqrt{n}\Delta_x \hspace{-1mm}+\hspace{-1mm} \alpha \beta \sqrt{n} D, \nonumber
\end{align}
where $(a)$ holds because $\rho(\vW - \vR) = \beta :=|\lambda_2(W)|$, $(b)$ holds because
\begin{align*}
&\|\nabla F^{k+1}(\bar{\vx}^{k})\| \le \|\nabla F^{k+1}(\bar{\vx}^{k}) \hspace{-0.8mm}-\hspace{-0.8mm} \nabla F^{k+1}(\tilde{\vx}^{k*})\| \nonumber \\
&\quad + \|\nabla F^{k+1}(\tilde{\vx}^{k*}) \hspace{-0.8mm}-\hspace{-0.8mm} \nabla F^{k+1}(\tilde{\vx}^{(k+1)*})\| \hspace{-0.8mm}+\hspace{-0.8mm} \|\nabla F^{k+1}(\tilde{\vx}^{(k+1)*})\| \nonumber \\
& \le L \|\bar{\vx}^{k} - \tilde{\vx}^{k*}\| + L \|\tilde{\vx}^{k*} - \tilde{\vx}^{(k+1)*}\|+ F^{k+1}(\tilde{\vx}^{(k+1)*})\|,
\end{align*}
and $(c)$ holds because $\|\vx^k \hspace{-0.5mm}-\hspace{-0.5mm} \bar{\vx}^k - \alpha [\nabla F^{k+1}(\vx^k) - \nabla F^{k+1}(\bar{\vx}^{k})]\|\le(1-\frac{\alpha \mu}{2})\|\vx^k \hspace{-0.5mm}-\hspace{-0.5mm} \bar{\vx}^k\| \le \|\vx^k \hspace{-0.5mm}-\hspace{-0.5mm} \bar{\vx}^k\|$ when $\alpha \le \frac{2}{\mu+L}$ (which can be derived by following the arguments in \eqref{xbwe78} and \eqref{23n33n}), and
\begin{align*}
\| \tilde{\vx}^{k*} \hspace{-0.8mm}-\hspace{-0.8mm} \tilde{\vx}^{(k+1)*}\|^2 &= \sum_{i=1}^n \|\tilde{x}^{k*} - \tilde{x}^{(k+1)*}\|^2 \le n \Delta_x^2,  \\
\|\nabla F^{k+1}(\tilde{\vx}^{(k+1)*})\|^2 &= \sum_{i=1}^{n}\|\nabla f_i^{k+1}(\tilde{x}^{(k+1)*})\|^2 \\
&\le \Big(\sum_{i=1}^{n} \|\nabla f_i^{k+1}(\tilde{x}^{(k+1)*})\|\Big)^2 \le n D^2.
\end{align*}
This concludes the proof.

\section{Proof of Lemma \ref{lm-rho-A-diffusion}}\label{app-3}
The characteristic polynomial of matrix $A$ is given by $$p(\tau) = \tau^2 - \Big( 1 - \frac{\alpha \mu}{2} + \beta \Big) \tau + \beta \Big(1 - \frac{\alpha \mu}{2}\Big) - \beta \alpha^2 L^2.$$
It can be verified that the roots $\tau_1$ and $\tau_2$ (we assume $\tau_1 > \tau_2$) of $p(\tau)=0$ are given by
\begin{align}
\tau = \frac{\big( 1 - \frac{\alpha \mu}{2} + \beta \big) \pm \sqrt{\big( 1 - \frac{\alpha \mu}{2} - \beta \big)^2 + 4 \beta \alpha^2 L^2}}{2}.
\end{align}
If $ 1 - \frac{\alpha \mu}{2} + \beta > 0$, the spectral norm of matrix $A$ is determined by $\tau_1$, i.e., $\rho(A) = \max\{|\tau_1|, |\tau_2|\} = \tau_1$. When $\alpha \le \frac{\mu(1-\beta)}{10L^2}$, we can prove that $1 - \frac{\alpha \mu}{2} + \beta > 0$ and
\begin{align}
\label{eq-DGD-root-bound}
\big( 1 - \frac{\alpha \mu}{2} - \beta \big)^2 + 4\beta \alpha^2 L^2 \le \big( 1 - \frac{\alpha \mu}{4} - \beta \big)^2.
\end{align}
To see so, with simple algebraic operations, one can verify that inequality \eqref{eq-DGD-root-bound} is equivalent to
\begin{align}
\label{2dbd99}
4\alpha \beta L^2 + \frac{3}{16} \alpha \mu^2 \le \frac{\mu}{2}(1-\beta).
\end{align}
Since $4\alpha \beta L^2 + \frac{3}{16} \alpha \mu^2 \le 4\alpha L^2 + \frac{3}{16} \alpha L^2 \le 5 \alpha L^2$, it is enough to set $\alpha \le \frac{\mu(1-\beta)}{10 L^2}$ to guarantee \eqref{2dbd99}. Inequality \eqref{eq-DGD-root-bound} implies that
$\rho(A) = \tau_1 \le [1 - \frac{\alpha \mu}{2} + \beta + 1 - \frac{\alpha \mu}{4} - \beta]/2= 1 - \frac{3\alpha \mu}{8} < 1$,
which completes the proof.

\section{Proof of Theorem \ref{thm-1}}\label{app-4}
Since $\rho(A)<1$, we know $I-A$ is invertible. Then, the inequality ${\boldsymbol z}^k \le A {\boldsymbol z}^{k-1} + {\boldsymbol b}$ in \eqref{DGD-main-recursion} leads to
\begin{align}
{\boldsymbol z}^k \le (A)^k {\boldsymbol z}^{0} + (I-A)^{-1}{\boldsymbol b}.
\end{align}
Since $\rho(A) = 1 - O(\mu\alpha) < 1$, it holds from Theorems  8.5.1  and  8.5.2 of \cite{horn2012matrix} that each entry in $(A)^k$ will vanish with rate $(\rho(A))^k$. Therefore, both $\|\bar{\vx}^{k} - \tilde{\vx}^{k*}\|$ and $\|\vx^{k} - \bar{\vx}^{k}\|$ will also converge with bounded error at rate $(\rho(A))^k$, and the limiting error is
\begin{align}
\limsup_{k\to \infty} {\boldsymbol z}^k = (I-A)^{-1}{\boldsymbol b}.
\end{align}

Next we examine $(I-A)^{-1}$. Note that
\begin{align}\label{23bbs9}
(I-A)^{-1} &=
\ba{cc}
\frac{\alpha \mu}{2} & -\alpha L \\
-\alpha \beta L & 1-\beta
\ea^{-1} \nonumber \\
&=
\frac{1}{\frac{(1-\beta)\alpha \mu }{2} - \alpha^2 L^2 \beta}
\ba{cc}
1-\beta & \alpha  L \\
\alpha \beta L & \frac{\alpha \mu}{2}
\ea \nonumber \\
&\overset{(a)}{\le}
\ba{cc}
\frac{4}{\alpha \mu} & \frac{4L}{\mu(1-\beta)}\\
\frac{4\beta L}{\mu(1-\beta)} & \frac{2}{1-\beta}
\ea,
\end{align}
where $(a)$ holds because $\frac{(1-\beta)\alpha \mu }{2} - \alpha^2 L^2 \beta \ge \frac{(1-\beta)\alpha \mu }{4}$ when $\alpha \le \frac{\mu(1-\beta)}{10 L^2}$. With \eqref{23bbs9} we have
\begin{align}
\limsup_{k\to \infty} {\boldsymbol z}^k \hspace{-1mm}\leq\hspace{-1mm}  \ba{cc}
\frac{4}{\alpha \mu} & \frac{4L}{\mu(1-\beta)}\\
\frac{4\beta L}{\mu(1-\beta)} & \frac{2}{1-\beta}
\ea
\ba{c}
\left(1 - \frac{\alpha \mu}{2}\right) \sqrt{n} \Delta_x \\
\sqrt{n}\alpha \beta L \Delta_x + \sqrt{n}\alpha \beta D
\ea, \nonumber
\end{align}
which further implies the following two inequalities
\begin{align}
&\hspace{-1em}\limsup_{k\to \infty} \|\bar{\vx}^{k} - \tilde{\vx}^{k*}\| \nonumber \\
&\le \frac{4\sqrt{n}\Delta_x}{\alpha \mu} - 2\sqrt{n}\Delta_x + \frac{4\sqrt{n}\alpha \beta L^2 \Delta_x}{\mu(1-\beta)} + \frac{4\sqrt{n}\alpha \beta L D}{\mu(1-\beta)} \nonumber \\
& \overset{(a)}{\le}  \frac{4\sqrt{n}\Delta_x}{\alpha \mu} + \frac{4\sqrt{n}\alpha \beta L D}{\mu(1-\beta)},
\end{align}
\begin{align}
&\limsup_{k\to \infty} \|\vx^{k} - \bar{\vx}^{k}\| \nonumber \\
&\le \frac{4\beta L\sqrt{n}\Delta_x}{\mu(1-\beta)} \hspace{-0.3mm}-\hspace{-0.3mm} \frac{2\alpha \beta L \sqrt{n} \Delta_x}{1-\beta} \hspace{-0.3mm}+\hspace{-0.3mm} \frac{2\alpha \beta L \sqrt{n}\Delta_x}{1-\beta} \hspace{-0.4mm}+\hspace{-0.4mm} \frac{2\sqrt{n}\alpha \beta D}{1-\beta} \nonumber \\
&\hspace{-0.4mm}=\hspace{-0.4mm} \frac{4\beta L\sqrt{n}\Delta_x}{\mu(1-\beta)} \hspace{-0.4mm}+\hspace{-0.4mm} \frac{2\sqrt{n}\alpha \beta D}{1-\beta} \nonumber \\
&\hspace{-0.4mm}\le\hspace{-0.4mm}\hspace{-0.4mm} \frac{4\beta L\sqrt{n}\Delta_x}{\mu(1-\beta)} \hspace{-0.4mm}+\hspace{-0.4mm} \frac{2\sqrt{n}\alpha \beta L D}{\mu(1-\beta)}.
\end{align}
Here $(a)$ holds because $- 2\sqrt{n}\Delta_x + \frac{4\sqrt{n}\alpha \beta L^2 \Delta_x}{\mu(1-\beta)} < 0$ when $\alpha \le \frac{\mu(1-\beta)}{10 L^2}$. With the above two inequalities, we have
\begin{align}
&\limsup_{k\to \infty} \frac{1}{\sqrt{n}} \|\vx^{k} - \tilde{\vx}^{k*}\| \nonumber \\
\le&\ \frac{1}{\sqrt{n}} \limsup_{k\to \infty} \|\bar{\vx}^{k} - \tilde{\vx}^{k*}\| + \frac{1}{\sqrt{n}} \limsup_{k\to \infty} \|\vx^{k} - \bar{\vx}^{k}\| \nonumber \\
\le&\ \left( \frac{4}{\alpha \mu} + \frac{4\beta L}{\mu(1-\beta)} \right)\Delta_x + \frac{6\alpha \beta L D}{\mu(1-\beta)},
\end{align}
which completes the proof.

\section{Proof of Lemma \ref{lm-diging-y-bary}}\label{app-5}
By subtracting \eqref{23bs9987} from \eqref{diging-2}, we have
\begin{align} \label{23bs99}
& \vy^{k+1} - \bar{\vy}^{k+1} \\
=& (\vW - \vR)\vy^k + (I_{nd} - \vR)\big(\nabla F^{k+1}(\vx^{k+1}) - \nabla F^{k}(\vx^{k})\big) \nonumber \\
\overset{(a)}{=}& (\vW \hspace{-1mm}-\hspace{-1mm} \vR)(\vy^k \hspace{-1mm}-\hspace{-1mm} \bar{\vy}^k) \hspace{-1mm}+\hspace{-1mm} (I_{nd} \hspace{-1mm}-\hspace{-1mm} \vR)\big(\nabla F^{k+1}(\vx^{k+1}) \hspace{-1mm}-\hspace{-1mm} \nabla F^{k}(\vx^{k})\big), \nonumber
\end{align}
where $(a)$ holds because $\vW\bar{\vy}^k = \vR\bar{\vy}^k$. Since $\rho(\vW - \vR) = \beta$ and $\rho(I_{nd}-\vR) = 1$, from \eqref{23bs99} we have
\begin{align}
&\ \|\vy^{k+1} - \bar{\vy}^{k+1}\| \nonumber \\
\le& \beta\|\vy^k - \bar{\vy}^k\| + \|\nabla F^{k+1}(\vx^{k+1}) - \nabla F^{k}(\vx^{k})\| \nonumber \\
\le& \beta \|\vy^k - \bar{\vy}^k\| + \|\nabla F^{k+1}(\vx^{k+1}) - \nabla F^{k+1}(\tilde{\vx}^{(k+1)*})\| \nonumber  \\
&\hspace{-4mm} +\hspace{-0.8mm} \|\hspace{-0.3mm}\nabla\hspace{-0.6mm} F^{k+1}\hspace{-0.4mm}(\tilde{\vx}^{(k+1)*}\hspace{-0.4mm}) \hspace{-0.8mm}-\hspace{-0.8mm} \nabla \hspace{-0.6mm} F^{k}\hspace{-0.4mm}(\tilde{\vx}^{k*})\hspace{-0.4mm}\| \hspace{-0.8mm}+\hspace{-0.8mm} \|\nabla F^{k}(\tilde{\vx}^{k*}) \hspace{-0.8mm}-\hspace{-0.8mm} \nabla F^{k}(\vx^{k}) \| \nonumber \\
\overset{(a)}{\le}& \beta \|\vy^k - \bar{\vy}^k\| \hspace{-1mm} + \hspace{-1mm} L\|\vx^{k+1} \hspace{-1mm}-\hspace{-1mm} \tilde{\vx}^{(k+1)*}\| \hspace{-1mm}+\hspace{-1mm} \sqrt{n} \Delta_g \hspace{-1mm}+\hspace{-1mm} L\|\vx^{k} - \tilde{\vx}^{k*}\|  \nonumber \\
\overset{(b)}{\le}& \beta \|\vy^k - \bar{\vy}^k\| \hspace{-0.4mm}+\hspace{-0.4mm} L\|\vx^{k+1} \hspace{-0.4mm}-\hspace{-0.4mm} \vx^{k}\| \nonumber \\
& \hspace{-0.4mm}+\hspace{-0.4mm}  2L\|\vx^k - \bar{\vx}^k\| \hspace{-0.6mm}+\hspace{-0.6mm} 2L\|\bar{\vx}^k \hspace{-0.6mm}-\hspace{-0.6mm} \tilde{\vx}^{k*}\| \hspace{-0.6mm}+\hspace{-0.6mm} L\sqrt{n}\Delta_x \hspace{-0.6mm}+\hspace{-0.6mm}  \sqrt{n} \Delta_g. \label{6sd8n}
\end{align}
Inequality $(a)$ holds because of \eqref{ineq-smoothness} in Assumption \ref{ass-smooth} and
\begin{align}
&\ \|\nabla F^{k+1}(\tilde{\vx}^{(k+1)*}) - \nabla F^{k}(\tilde{\vx}^{k*})\|^2 \nonumber \\
\le&\ \left( \sum_{i=1}^{n} \|\nabla f_i^{k+1}(\tilde{x}^{(k+1)*}) - \nabla f_i^{k}(\tilde{x}^{k*}) \| \right)^2 \le n \Delta_g^2,
\end{align}
where the last inequality holds because of Assumption \ref{ass-g-movement}. Inequality $(b)$ holds because
\begin{align}
\|\vx^{k+1} - \tilde{\vx}^{(k+1)*}\| &\le \|\vx^{k+1} - \vx^{k}\| + \|\vx^{k} - \bar{\vx}^k\| \nonumber \\
&+ \|\bar{\vx}^k - \tilde{\vx}^{k*}\| + \|\tilde{\vx}^{k*} - \tilde{\vx}^{(k+1)*} \|, \\
\|\vx^k - \tilde{\vx}^{k*}\| &\le \|\vx^{k} - \bar{\vx}^k\| + \|\bar{\vx}^k - \tilde{\vx}^{k*}\|,
\end{align}
and $\|\tilde{\vx}^{k*} - \tilde{\vx}^{(k+1)*} \| \le \sqrt{n}\Delta_x$.

Next, we turn to bounding the term $\|\vx^{k+1} - \vx^{k}\|$. With recursion \eqref{diging-1}, we have
\begin{align}
&\vx^{k+1} - \vx^k \nonumber\\
=& (\vW - I_{nd})\vx^k - \alpha \vW \vy^k \nonumber\\
=& (\vW - I_{nd})\vx^k - \alpha \vW (\vy^k - \bar{\vy}^k + \bar{\vy}^k - \vR\nabla F^k(\tilde{\vx}^{k*})) \nonumber \\
=& (\vW \hspace{-0.5mm}-\hspace{-0.5mm} I_{nd})(\vx^k \hspace{-0.5mm}-\hspace{-0.5mm} \bar{\vx}^k) \hspace{-0.5mm}-\hspace{-0.5mm} \alpha \vW (\vy^k \hspace{-0.5mm}-\hspace{-0.5mm} \bar{\vy}^k \hspace{-0.5mm}+\hspace{-0.5mm} \bar{\vy}^k \hspace{-0.5mm}-\hspace{-0.5mm} \vR\nabla F^k(\tilde{\vx}^{k*})). \nonumber
\end{align}
The above relation leads to
\begin{align}
&\ \|\vx^{k+1} - \vx^k\| \nonumber \\
\overset{(a)}{\le}& 2 \|\vx^k \hspace{-0.5mm}-\hspace{-0.5mm} \bar{\vx}^k\| \hspace{-0.5mm}+\hspace{-0.5mm} \alpha \|\vy^k \hspace{-0.5mm}-\hspace{-0.5mm} \bar{\vy}^k\| \hspace{-0.5mm}+\hspace{-0.5mm} \alpha \|\vR \nabla F^k(\vx^k) \hspace{-0.5mm}-\hspace{-0.5mm} \vR \nabla F^k(\tilde{\vx}^{k*})\| \nonumber \\
\overset{(b)}{\le}& 2 \|\vx^k - \bar{\vx}^k\| + \alpha \|\vy^k - \bar{\vy}^k\| + \alpha L \|\vx^k - \tilde{\vx}^{k*}\| \nonumber\\
\le& (2 \hspace{-0.5mm}+\hspace{-0.5mm} \alpha L)\|\vx^k \hspace{-0.5mm}-\hspace{-0.5mm} \bar{\vx}^k\| \hspace{-0.5mm}+\hspace{-0.5mm} \alpha \|\vy^k \hspace{-0.5mm}-\hspace{-0.5mm} \bar{\vy}^k\| \hspace{-0.5mm}+\hspace{-0.5mm} \alpha L \|\bar{\vx}^k \hspace{-0.5mm}-\hspace{-0.5mm} \tilde{\vx}^{k*}\|,  \label{23b23b39}
\end{align}
where $(a)$ holds because $\rho(\vW - I_{dn})\le 2$, $\rho(\vW) \le 1$ and $\bar{\vy}^k = \vR \nabla F^k(\vx^k)$ as shown in \eqref{tttaaa}, while $(b)$ holds because
$
\|\vR \nabla F^k(\vx^k) - \vR \nabla F^k(\tilde{\vx}^{k*})\| \le  \|\nabla F^k(\vx^k) - \nabla F^k(\tilde{\vx}^{k*})\| \le L \|\vx^k - \tilde{\vx}^{k*}\|.
$
By substituting \eqref{23b23b39} into \eqref{6sd8n} we reach
\begin{align}\label{bound-y1}
&\ \|\vy^{k+1} - \bar{\vy}^{k+1}\| \nonumber \\
\le&\ (\beta + \alpha L)\|\vy^k - \bar{\vy}^k\| + (4L + \alpha L^2)\|\vx^k - \bar{\vx}^k\| \nonumber \\
&\quad + (2L + \alpha L^2)\|\bar{\vx}^k - \tilde{\vx}^{k*}\|^2 +  L\sqrt{n}\Delta_x +  \sqrt{n}  \Delta_g.
\end{align}
If $\alpha \le \frac{1-\beta}{2L}$, it holds that $\beta + \alpha L \le \frac{1+\beta}{2}$ and $\alpha L^2 \le L$. This fact together with \eqref{bound-y1} leads to the result in \eqref{bound-y}.

\section{Proof of Lemma \ref{lm-bound-x-xbar}}\label{app-6}
Taking the average for both sides of recursion \eqref{diging-1}, we have
\begin{align}
\bar{\vx}^{k+1} = \bar{\vx}^k - \alpha \bar{\vy}^k = \vR (\vx^k - \alpha \vy^k). \label{2bs7}
\end{align}
By subtracting \eqref{2bs7} from \eqref{diging-1} we have
\begin{align*}
{\vx}^{k+1} - \bar{\vx}^{k+1} & = (\vW - \vR)\left(\vx^k - \bar{\vx}^k - \alpha(\vy^k - \bar{\vy}^k)\right),
\end{align*}
such that
\begin{align}
& \|{\vx}^{k+1} - \bar{\vx}^{k+1}\| \nonumber \\
& \leq \|(\vW - \vR)(\vx^k - \bar{\vx}^k )\| + \alpha \| (\vW - \vR) ( \vy^k - \bar{\vy}^k)\| \nonumber \\
& \leq \beta \| \vx^k - \bar{\vx}^k \| + \alpha \beta \| \vy^k - \bar{\vy}^k \|.
\end{align}
This is the upper bound in \eqref{bound-x-xbar}.

\section{Proof of Lemma \ref{lm-xbar-xstar}}\label{app-7}
By subtracting $\tilde{\vx}^{(k+1)*}$ from recursion \eqref{2bs7}, we get
\begin{align}\label{sb2387}
&\ \bar{\vx}^{k+1} - \tilde{\vx}^{(k+1)*} \nonumber \\
=&\ \bar{\vx}^k - \tilde{\vx}^{(k+1)*} - \alpha  \bar{\vy}^k \nonumber  \\
\overset{(a)}{=}&\ \bar{\vx}^k - \tilde{\vx}^{k*} + \tilde{\vx}^{k*} - \tilde{\vx}^{(k+1)*} - \alpha \vR \nabla F^k(\vx^k) \nonumber \\
=&\ \bar{\vx}^k - \tilde{\vx}^{k*} - \alpha \vR \nabla F^k(\bar{\vx}^k)  \nonumber \\
&\quad + \alpha \vR(\nabla F^k(\bar{\vx}^k) - \nabla F^k({\vx}^k)) + \tilde{\vx}^{k*} - \tilde{\vx}^{(k+1)*},
\end{align}
where $(a)$ holds because $\bar{\vy}^k = \vR\nabla F^k(\vx^k)$ as shown in \eqref{tttaaa}. Note that when $\alpha \le \frac{2}{\mu + L}$, we have
\begin{align}\label{23bs9826}
&\|\bar{\vx}^k - \tilde{\vx}^{k*} - \alpha \vR \nabla F^k(\bar{\vx}^k)\|^2 \\
=& n \|\bar{x}^k - \tilde{x}^{k*} - \frac{\alpha}{n}\sum_{i=1}^n\nabla f_i^k(\bar{x}^k)\|^2 \nonumber \\
\overset{(a)}{\le}&\ n \left( 1 - \frac{\alpha \mu}{2} \right)^2 \|\bar{x}^k - \tilde{x}^{k*}\|^2 = \left( 1 - \frac{\alpha \mu}{2} \right)^2 \|\bar{\vx}^k - \tilde{\vx}^{k*}\|^2, \nonumber
\end{align}
where $(a)$ follows the derivation of \eqref{xbwe78}. With \eqref{sb2387}, \eqref{23bs9826} and Assumption \ref{ass-smooth}, using the triangle inequality we reach \eqref{lm6-eq}.

\section{Proof of Lemma \ref{lm-7}}\label{app-8}
The argument to prove $\rho(A) \le 1 - \frac{\alpha \mu}{4}$ is adapted from the proof of Lemma 2 in \cite{qu2018harnessing}. The characteristic polynomial $p(\tau)$ of $A$ is derived as
\begin{align}\label{238sdhsdhds7}
p(\tau) \hspace{-1mm}=\hspace{-1mm} [\big( \tau \hspace{-0.5mm}-\hspace{-0.5mm} \frac{1+\beta}{2} \big) \big(\tau \hspace{-0.5mm}-\hspace{-0.5mm} \beta\big) \hspace{-0.5mm}-\hspace{-0.5mm} 5\alpha \beta L] \big( \tau \hspace{-0.5mm}-\hspace{-0.5mm} (1\hspace{-0.5mm}-\hspace{-0.5mm}\frac{\alpha \mu}{2}) \big) \hspace{-0.5mm}-\hspace{-0.5mm} 3\alpha^2 \beta L^2.
\end{align}

Now we define $p_0(\tau) = \big( \tau - \frac{1+\beta}{2} \big) \big(\tau - \beta\big) - 5\alpha \beta L$. The roots for $p_0(\tau)=0$ are given as
\begin{align}
\tau &= \frac{\frac{1+3\beta}{2} \pm \sqrt{(\frac{1+\beta}{2} + \beta)^2 - 4(\frac{1+\beta}{2})\beta + 20\alpha \beta L}}{2} \nonumber \\
&= \frac{\frac{1+3\beta}{2} \pm \sqrt{(\frac{1+\beta}{2} - \beta)^2 + 20\alpha \beta L}}{2}.
\end{align}
We let $\tau_1 < \tau_2$ be two roots of $p_0(\tau)=0$. Apparently, $\tau_1$ and $\tau_2$ are real numbers and it holds that
\begin{align}\label{23nds976}
\tau_1 < \tau_2 \le \frac{\frac{1+3\beta}{2} + \sqrt{(\frac{1+\beta}{2} - \beta)^2 + 20\alpha \beta L}}{2} \le \frac{\beta+3}{4},
\end{align}
when $\alpha \le \frac{3(1-\beta)^2}{80L}$. Relation \eqref{23nds976} implies that
\begin{align}\label{23bsd87}
p_0(\tau) \hspace{-0.8mm}=\hspace{-0.8mm} (\tau \hspace{-0.8mm}-\hspace{-0.8mm} \tau_1)(\tau \hspace{-0.8mm}-\hspace{-0.8mm} \tau_2) \ge (\tau \hspace{-0.8mm}-\hspace{-0.8mm} (\frac{\beta+3}{4}))^2 ~ \mbox{ when } ~ \tau \ge \frac{\beta+3}{4}.
\end{align}
Let $\tau^* = 1 - \frac{\alpha \mu}{4}$. When $\alpha \le \frac{1-\beta}{2\mu}$, it follows that
\begin{align}\label{xbsd623}
\tau^* \ge \max\{ 1 - \frac{\alpha \mu}{4}, \frac{\beta+7}{8} \} \ge \frac{\beta+7}{8} > \frac{\beta+3}{4}.
\end{align}
By substituting $\tau^*$ into \eqref{23bsd87}, we have
\begin{align}\label{23bsd98}
p_0(\tau^*) &\ge (\tau^* \hspace{-0.8mm}-\hspace{-0.8mm} (\frac{\beta+3}{4}))^2 \nonumber \\
&\overset{\eqref{xbsd623}}{\ge} (\frac{\beta+7}{8} - \frac{\beta+3}{4})^2 = \frac{(1-\beta)^2}{64}.
\end{align}
Furthermore, by substituting \eqref{xbsd623} and \eqref{23bsd98} into \eqref{238sdhsdhds7} we reach
\begin{align}
p(\tau^*) {\ge} \frac{\alpha \mu}{4} \frac{(1-\beta)^2}{64} - 3\alpha^2 \beta L^2 \ge 0,
\end{align}
when $\alpha \le \frac{(1-\beta)^2\mu}{768L^2}$. Since $\tau^* > 1 - \frac{\alpha \mu}{2} > \frac{\beta+3}{4}$ (see \eqref{xbsd623}), we have $p_0(\tau) > p_0(\tau^\star)$ when $\tau > \tau^\star$, which implies that
\begin{align}
p(\tau) > p(\tau^*) \ge 0 ~ \mbox{ when } ~ \tau > \tau^*.
\end{align}
It means that there are no real eigenvalues in the interval $(\tau^*, +\infty)$. Since the matrix $A$ is non-negative, it is known from Theorem 8.3.1 of \cite{horn2012matrix} that $\rho(A)$ is a real and non-negative eigenvalue of $A$. As a result, we have
\begin{align}
\rho(A) \le \tau^* = 1 - \frac{\alpha \mu}{4} < 1.
\end{align}
Note that to guarantee the above inequality, we need to make $\alpha$ small enough such that
\begin{align}\label{23bsdbds8}
\alpha \le \min\left\{\frac{3(1-\beta)^2}{80L}, \frac{1-\beta}{2\mu}, \frac{(1-\beta)^2\mu}{768L^2}\right\} = \frac{(1-\beta)^2\mu}{c L^2},
\end{align}
where $c=768$. Since $A$ is stable, the inverse matrix $(I-A)^{-1}$ exists. Note that
\begin{align}\label{xbsbswh}
&(I-A)^{-1} \\
=&\ \ba{ccc}
\frac{1-\beta}{2} & -5 L & -3 L \\
-\alpha \beta & 1-\beta & 0\\
0 & -\alpha L & \frac{\alpha \mu}{2}
\ea^{-1} \nonumber \\
=&\ \frac{1}{C_1}
\ba{ccc}
\frac{\alpha \mu (1-\beta)}{2} & \frac{5\alpha \mu L}{2} + 3\alpha L^2 & 3 L(1-\beta)\\
\frac{\alpha^2 \beta \mu}{2} & \frac{\alpha \mu (1-\beta)}{4} & 3\alpha \beta L \\
\alpha^2 \beta L & \frac{\alpha L(1-\beta)}{2} & \frac{(1-\beta)^2}{2} - 5\alpha \beta L
\ea \nonumber \\
\overset{(a)}{\le}&\ \frac{8}{(1-\beta)^2\alpha \mu}
\ba{ccc}
\frac{\alpha \mu (1-\beta)}{2} & 6\alpha L^2 & 3 L(1-\beta)\\
\frac{\alpha^2 \beta \mu}{2} & \frac{\alpha \mu (1-\beta)}{4} & 3\alpha \beta L \\
\alpha^2 \beta L & \frac{\alpha L(1-\beta)}{2} & \frac{(1-\beta)^2}{2}
\ea, \nonumber
\end{align}
where $C_1 = \frac{(1-\beta)^2\alpha \mu}{4} - 3\alpha^2 \beta L^2 - \frac{5\alpha^2 \beta \mu L}{2}$, inequality $(a)$ holds because $C_1 \ge \frac{(1-\beta)^2\alpha \mu}{4} - 3\alpha^2 L^2 - \frac{5\alpha^2 \mu L}{2} \ge \frac{(1-\beta)^2\alpha \mu}{4} - 6\alpha^2 L^2 \ge \frac{(1-\beta)^2\alpha \mu}{8}$ when $\alpha \le \frac{(1-\beta)^2\mu}{48L^2}$. Apparently, the step-size satisfying \eqref{23bsdbds8} can guarantee this condition.

\section{Proof of Theorem \ref{th-2}}\label{app-9}
With \eqref{23nsd8} and \eqref{xbsbswh}, we have
\begin{align}\label{x23765}
&\ \limsup_{k\to \infty} \ba{c}
\|{\vx}^{k} - \bar{\vx}^{k}\| \\
\|\bar{\vx}^{k} - \tilde{\vx}^{k*}\|
\ea \\
\le&\ C_2
\ba{cc}
\frac{\alpha^2 \beta \mu}{2} & 3\alpha \beta L \\
\alpha^2 \beta L & \frac{(1-\beta)^2}{2}
\ea
\ba{c}
L\sqrt{n}\Delta_x + \sqrt{n} \Delta_g \\
\sqrt{n}\Delta_x
\ea \nonumber \\
=&\ C_2
\ba{c}
\frac{\alpha^2 \beta \mu L\sqrt{n}}{2}\Delta_x + \frac{\alpha^2 \beta \mu \sqrt{n}}{2}\Delta_g + 3\alpha \beta L \sqrt{n} \Delta_x \\
\alpha^2 \beta L^2 \sqrt{n} \Delta_x + \alpha^2 \beta L \sqrt{n} \Delta_g + \frac{(1-\beta)^2\sqrt{n}}{2}\Delta_x
\ea \nonumber \\
\overset{(a)}{\le}&\ C_2
\ba{c}
4\alpha \beta L \sqrt{n} \Delta_x + \alpha^2 \beta L \sqrt{n} \Delta_g  \\
\alpha \beta L \sqrt{n} \Delta_x + \alpha^2 \beta L \sqrt{n} \Delta_g + \frac{(1-\beta)^2\sqrt{n}}{2}\Delta_x
\ea, \nonumber
\end{align}
where $C_2 = \frac{8}{(1-\beta)^2\alpha \mu}$, $(a)$ holds because $\mu \le L$, $\alpha^2\mu/2 \le \alpha $ and $\alpha L \le 1$ when $\alpha \le \frac{(1-\beta)^2 \mu}{768 L^2}$. Inequality \eqref{x23765} implies that
\begin{align}
&\ \limsup_{k\to \infty} \frac{1}{\sqrt{n}} \left( \|{\vx}^{k} - \bar{\vx}^{k}\| + \|\bar{\vx}^{k} - \tilde{\vx}^{k*}\| \right) \nonumber \\
\le&\ \left( \frac{40\beta L}{(1-\beta)^2 \mu} + \frac{4}{\alpha \mu} \right) \Delta_x + \frac{16\alpha \beta L}{(1-\beta)^2\mu} \Delta_g.
\end{align}
This together with $\|{\vx}^{k} - \tilde{\vx}^{k*}\|\le \|{\vx}^{k} - \bar{\vx}^{k}\| + \|\bar{\vx}^{k} - \tilde{\vx}^{k*}\|$ leads to \eqref{eq-thm-diging}.

\bibliographystyle{IEEEbib}
\bibliography{reference}

\end{document}